 \newtheorem{thm}{Theorem}[section]
 \newtheorem{crlre}[thm]{Corollary}
 \newtheorem{lma}[thm]{Lemma}
 \newtheorem{ppsn}[thm]{Proposition}
 \theoremstyle{definition}
 \newtheorem{dfn}[thm]{Definition}
 \theoremstyle{remark}
 \newtheorem{rmrk}[thm]{Remark}
 \newtheorem{Problem}{Problem}
 \numberwithin{equation}{section}
 \newcommand{\D}{\mathbb{D}}
 \newcommand{\cir}{\mathbb{T}}
 \newcommand{\R}{\mathbb{R}}
 \newcommand{\C}{\mathbb{C}}
 \newcommand{\N}{\mathbb{N}}	
 \newcommand{\Z}{\mathbb{Z}}
 \newcommand{\hilh}{\mathcal{H}}
 \newcommand{\hilk}{\mathcal{K}}
 \newcommand{\bh}{\mathcal{B}(\hilh)}
 \newcommand{\sph}{\mathcal{S}_p(\hilh)}
 \newcommand{\lip}{\operatorname{Lip}}
 \newcommand{\lipp}{\operatorname{\mathcal{L}ip}}
 \newcommand{\Tr}{\operatorname{Tr}}
\begin{document}

\title[Lipschitz Estimates and an application to trace formulae]
 {Lipschitz Estimates and an application to trace formulae}

\author[Bhattacharyya]{Tirthankar Bhattacharyya}
\address{Department of Mathematics, Indian Institute of Science,	Bangalore 560012, India}
\email{tirtha@iisc.ac.in}

\author[Chattopadhyay]{Arup Chattopadhyay}

\address{Department of Mathematics, Indian Institute of Technology Guwahati, Guwahati, 781039, India}
\email {arupchatt@iitg.ac.in}

\author[Giri]{Saikat Giri}
\address{Department of Mathematics, Indian Institute of Technology Guwahati, Guwahati, 781039, India}
\email{saikat.giri@iitg.ac.in}

\author[Pradhan]{Chandan Pradhan}
\address{Department of Mathematics, Indian Institute of Science,	Bangalore 560012, India}
\email{chandan.pradhan2108@gmail.com}	

\subjclass{Primary 47A55; Secondary 47A56, 47A20, 47B10}

\keywords{Contractions, Unitary dilations, Operator Lipschitz functions}

\date{\today}

\begin{abstract}
For each $p\in(1,\infty)$ and for every pair of contractions $(T_0,T_1)$ with $\|T_0\|<1$, we show that there exists a constant $d_{f, p,T_0}>0$ such that 
$$\|f(T_1)-f(T_0)\|_p\leq d_{f,p, T_0}\|T_1-T_0\|_p$$ 
for all Lipschitz functions $f$ on $\cir$. The functional calculus $f(T)$ for a contraction $T$ and for a Lipschitz function $f$ is formed through a unitary dilation of $T$ and is shown to be independent of the dilation being used. Using the estimate in the display above, we establish a modified Krein trace formula applicable to a specific category of pairs of contractions featuring Hilbert-Schmidt perturbations. An old result of Birman and Solomyak 
gives an expression of $f(U)-f(V)$ in the form of a double operator integral where $f$ is a Lipschitz function on the unit circle $\cir$ with $f'\in \lip_{\alpha}(\cir)$ for $\alpha>0$ and $(U,V)$ is a pair of unitary operators  with $U-V\in\mathcal{S}_{2}(\hilh)$ (the Hilbert-Schmidt class). We give an elementary proof of this formula for every Lipschitz function $f$ on the unit circle $\cir$, thereby enlarging the class of functions. As a consequence, we obtain the Schatten $2$-Lipschitz estimate $\|f(U)-f(V)\|_2\leq \|f\|_{\lip(\cir)}\|U-V\|_2$ for all Lipschitz functions $f:\cir\to\C$. 
\end{abstract}

\maketitle

\section{Introduction}

We denote the open unit disk in the complex plane $\C$ by $\D$ and the unit circle by $\cir$.

A remarkable result of Potapov and Sukochev \cite{PoSu11} shows that for a Lipschitz function $f$ on the real line $\R$ and a number  $p \in (1,\infty)$, there is a constant $c_p$ such that
\begin{align}\label{cp}
	\|f(A)-f(B)\|_p\leq c_p\,\|f\|_{\lip(\R)}\,\|A-B\|_p,
\end{align}
for any  self-adjoint (possibly unbounded) linear operators $A$ and $B$ such that $A-B$ is in the Schatten $p$-class $\sph$. Here $\| \cdot \|_p$ denotes the Schatten $p$-norm. For $p=\infty$, this is the operator norm and we shall denote that by $\| \cdot \|$. Also,  $\|f\|_{\lip(\R)}$ denotes the Lipschitz norm   of $f$:
\begin{equation}\label{lipnorm}
	\sup\left\{\frac{|f(x)-f(y)|}{|x-y|}:~x,y\in \R, x\neq y\right\}.
\end{equation}

The above result resolved a number of conjectures of Krein \cite{MGK}, Farforovskaya \cite{Farforovskaja74} and Peller \cite{Peller87}. For more on the history of this subject we refer the reader to the recent survey \cite{Peller16} by Aleksandrov and  Peller. In the unitary setting, an analogue of the above result was obtained in \cite[Theorem 2]{AyCoSu16}.

\noindent For each $p\in(1,\infty),$ there is a constant $d_p>0$ such that
\begin{align}\label{d_p}
	\|f(U_1)-f(U_0)\|_p\leq d_p\|f\|_{\lipp} \|U_1-U_0\|_p
\end{align}
where $$\|f\|_{\lipp}=\sup_{\substack{\lambda, \mu\in\R\\ \lambda\neq\mu}}\frac{|f(e^{i\lambda})-f(e^{i\mu})|}{|\lambda-\mu|},$$
for all Lipschitz functions $f:\mathbb{T}\to\mathbb{C}$ and all unitary operators $U_0$ and $U_1$ such that $U_1-U_0\in\mathcal S_p(\hilh)$. Further $d_p\leq 32(c_{p}+9)$, where $c_p$ is as in \eqref{cp}. 
And moreover,  $\|f\|_{\lipp} \leq \|f\|_{\lip(\cir)}$, where the Lipschitz norm  $\|f\|_{\lip(\cir)}$ of $f$ is defined similarly as in \eqref{lipnorm}. 

\noindent It was shown in {\normalfont\cite{CaMoPoSu14}} that $c_{p}=O(p^2/p-1)$, which is the optimal constant.

Motivated by \eqref{cp} and \eqref{d_p}, the main aim of this paper is to prove such a bound for contractions. 
As a consequence, this allows us to establish a trace formula for pairs of contractions like Krein's trace formula. 

To get into the specifics, an operator $T\in\bh$ is a contraction if $\|T\|\le 1$ and is a strict contraction if the inequality is strict. For a contraction $T$ on a Hilbert space $\hilh$, we shall always denote by $U_T$ {\em a unitary dilation} in the sense of Sz.-Nagy \cite{Sz.-Nagy},  see also \cite{Schaffer}, i.e., $U_T$ acts as a unitary operator on a Hilbert space $\hilk$ containing $\hilh$ as a closed subspace such that 

\begin{align*}
	T^{n}=P_{\hilh}U_{T}^{n}|_\hilh\hspace*{0.7cm}\text{~for~} n\in\mathbb{Z},
\end{align*}
where  $T^{-|n|}=(T^*)^{|n|}$, and $P_{\hilh}$ is the orthogonal projection from $\hilk$ onto $\hilh$.  
Let $N\in\N$. For a trigonometric polynomial $p_{N}(z)=\sum_{n=-N}^{N}a_{n}z^{n},~a_{n}\in\C$, define $p_{N}(T)$ as 
\begin{align*}
	&p_{N}(T)=\sum_{n=-N}^{-1}a_{n}T^{*|n|}+\sum_{n=0}^{N}a_{n}T^{n}.
\end{align*}
Hence, for any trigonometric polynomial $p$,
\begin{align*}
	&p(T)=P_{\hilh}p(U_{T})|_{\hilh}.
\end{align*}
By using continuous functional calculus, it is now natural that we define for any  $f\in C(\cir)$,
\begin{align*}
	&f(T)=P_{\hilh}f(U_T)|_{\hilh}.
\end{align*}
Clearly, $f(T)$ is a bounded operator satisfying {\em the von Neumann inequality}
\begin{align}
	\label{vN}\|f(T)\|\leq \|f\|_\infty.
\end{align}
Moreover, if $U_T$ and $V_T$ are two unitary dilations of $T$ on  Hilbert spaces $\hilk_1\supset\hilh$ and $\hilk_2\supset\hilh$ respectively, then for a trigonometric polynomial $p$, we have
\begin{align*}
	&\|P_{\hilh}f(U_T)|_{\hilh} -P_{\hilh}f(V_T)|_{\hilh} \| \\
	\le & \|P_{\hilh}f(U_T)|_{\hilh}-P_{\hilh}\,p(U_T)|_{\hilh}\|+\|P_{\hilh}f(V_T)|_{\hilh}-P_{\hilh}\,p(V_T)|_{\hilh}\|
\end{align*}
which by \eqref{vN} and the denseness of trigonometric polynomials in $C(\cir)$ proves that {\em $f(T)$ is independent of the choice of the unitary dilation $U_T$}. With this functional calculus (which is linear but not multiplicative) in hand, the main result of this note is as follows.

\begin{thm}\label{Thm3}
	Let $f:\cir\to\C$ be a Lipschitz function on $\cir$ and $1<p<\infty$. Then for any pair of contractions $(T_0,T_1)$ satisfying $\|T_{0}\|<1$ and the hypothesis $T_1-T_0\in\sph$, has the following estimate:
	\begin{align*}
		\|f(T_1)-f(T_0)\|_p\leq  ~2^{\frac{1}{p}}\left[\frac{2+(1-\|T_0\|^2)^{1/2}}{(1-\|T_0\|^2)^{1/2}}\right]~d_p \|f\|_{\lipp}\|T_1-T_0\|_p,
	\end{align*} 
	where $d_p$ and $ \|f\|_{\lipp}$ are as in \eqref{d_p}.
\end{thm}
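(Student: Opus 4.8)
The plan is to reduce the contraction estimate to the unitary estimate \eqref{d_p} by exhibiting a single unitary dilation of \emph{both} $T_0$ and $T_1$ on a common Hilbert space $\hilk$, in such a way that the difference of the dilations is controlled in $\sph$-norm by $\|T_1-T_0\|_p$. The natural candidate is the Sch\"affer-type dilation: since $\|T_0\|<1$, the defect operators $D_{T_0}=(I-T_0^*T_0)^{1/2}$ and $D_{T_0^*}=(I-T_0T_0^*)^{1/2}$ are \emph{invertible}, which is exactly what the constant $(1-\|T_0\|^2)^{-1/2}$ in the statement is tracking. First I would write down the Sz.-Nagy--Sch\"affer dilation $U_{T_i}$ of each $T_i$ acting on $\hilk=\hilh\oplus\ell^2(\N,\hilh)\oplus\ell^2(\N,\hilh)$ (or the two-sided version on $\bigoplus_{\Z}\hilh$), whose only non-scalar entries are built from $T_i$, $D_{T_i}$ and $D_{T_i^*}$; because $f(T_i)=P_\hilh f(U_{T_i})|_\hilh$ is independent of the dilation, this is a legitimate choice. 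Then $f(T_1)-f(T_0)=P_\hilh\big(f(U_{T_1})-f(U_{T_0})\big)|_\hilh$, so by \eqref{d_p},
\begin{align*}
	\|f(T_1)-f(T_0)\|_p\le\|f(U_{T_1})-f(U_{T_0})\|_p\le d_p\,\|f\|_{\lipp}\,\|U_{T_1}-U_{T_0}\|_p.
\end{align*}

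The crux is therefore the purely algebraic/operator-theoretic estimate $\|U_{T_1}-U_{T_0}\|_p\le 2^{1/p}\,\frac{2+(1-\|T_0\|^2)^{1/2}}{(1-\|T_0\|^2)^{1/2}}\,\|T_1-T_0\|_p$. Here I would compute $U_{T_1}-U_{T_0}$ entrywise: the block-matrix difference involves the corner $T_1-T_0$ together with the defect-operator differences $D_{T_1}-D_{T_0}$ and $D_{T_1^*}-D_{T_0^*}$. The key point is to bound $\|D_{T_1}-D_{T_0}\|_p$ in terms of $\|T_1-T_0\|_p$. This is where invertibility of $D_{T_0}$ enters: from $D_{T_1}^2-D_{T_0}^2=T_0^*T_0-T_1^*T_1=T_0^*(T_0-T_1)+(T_0-T_1)^*T_1$ one gets $\|D_{T_1}^2-D_{T_0}^2\|_p\le 2\|T_1-T_0\|_p$, and then the identity $D_{T_1}-D_{T_0}=D_{T_1}(D_{T_1}^2-D_{T_0}^2)D_{T_0}^{-2}-\dots$ — more cleanly, using $(D_{T_1}-D_{T_0})D_{T_0}+D_{T_1}(D_{T_1}-D_{T_0})=D_{T_1}^2-D_{T_0}^2$ and the fact that $D_{T_1}+D_{T_0}\ge D_{T_0}\ge (1-\|T_0\|^2)^{1/2}I$ is bounded below — yields $\|D_{T_1}-D_{T_0}\|_p\le (1-\|T_0\|^2)^{-1/2}\|D_{T_1}^2-D_{T_0}^2\|_p\le 2(1-\|T_0\|^2)^{-1/2}\|T_1-T_0\|_p$, and similarly for $D_{T_1^*}-D_{T_0^*}$. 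Note one does \emph{not} need $D_{T_1}$ invertible; only $D_{T_0}$ bounded below is used, which is why only $\|T_0\|<1$ is assumed.

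Assembling the pieces: the difference $U_{T_1}-U_{T_0}$, when written in the Sch\"affer form, has one "$T$-type" entry contributing $\|T_1-T_0\|_p$ and two "defect-type" entries each contributing at most $2(1-\|T_0\|^2)^{-1/2}\|T_1-T_0\|_p$; collecting these and accounting for the fact that the Sch\"affer dilation places $D_T$ and $D_{T^*}$ in disjoint block positions (so the two-sided direct sum contributes the factor $2^{1/p}$ via $\|A\oplus B\|_p^p=\|A\|_p^p+\|B\|_p^p$) gives the constant $2^{1/p}\big[2+(1-\|T_0\|^2)^{1/2}\big](1-\|T_0\|^2)^{-1/2}$. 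The main obstacle I anticipate is bookkeeping: getting the block structure of the Sch\"affer dilation exactly right so that the numerator is precisely $2+(1-\|T_0\|^2)^{1/2}$ rather than a cruder $3$ or $5$, and making sure the $2^{1/p}$ (rather than $2$ or $2^{2/p}$) falls out of the correct disjointness of the defect blocks. Everything else is the triangle inequality in $\sph$ together with the elementary defect-operator calculus above.
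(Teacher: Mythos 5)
Your proposal is correct and follows essentially the same route as the paper: Sch\"affer dilations on a common space, reduction to the unitary estimate \eqref{d_p}, the block decomposition of $U_{T_1}-U_{T_0}$ yielding the $2^{1/p}$ factor, and the bound $\|D_{T_1}-D_{T_0}\|_p\le (1-\|T_0\|^2)^{-1/2}\|D_{T_1}^2-D_{T_0}^2\|_p\le 2(1-\|T_0\|^2)^{-1/2}\|T_1-T_0\|_p$. Your Sylvester-equation step $(D_{T_1}-D_{T_0})D_{T_0}+D_{T_1}(D_{T_1}-D_{T_0})=D_{T_1}^2-D_{T_0}^2$ with $D_{T_0}\ge\delta I$ is exactly the content of the paper's Lemma \ref{Lem2}, which justifies it via the integral representation $A-B=\int_0^{\infty}e^{-tA}(A^2-B^2)e^{-tB}\,dt$.
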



Our main tool is the celebrated Sch\"affer dilation for contractions \cite{Schaffer} which perhaps first appeared in this context in \cite{Neid_1988} and \cite{Ry1}. It has been greatly used in \cite{KiSh07} and in \cite{MaNePe19}. In the literature, a complex-valued continuous function $f: I(\subseteq \R)\to \C$ is called {\em $\mathcal{S}_{p}$-Lipschitz} for $1\le p<\infty$ if there exists a constant $C_{f,\,p}$ such that 
\begin{equation*}
	\|f(A)-f(B)\|_p\leq C_{f,\,p}\|A-B\|_p,
\end{equation*}
for all self-adjoint operators $A, B$ with $\sigma(A)\cup\sigma(B)\subset I$ and $A-B\in\sph$. The $\mathcal{S}_p$-Lipschitz property for pairs of contractions has been studied in  \cite[Theorem 4.2]{KiSh07} and \cite[Theorem 6.4]{KiPoShSu}. The existing results are for Lipschitz functions in the {\em disk algebra} $\mathcal{A}(\mathbb T)$ of functions which are holomorphic in $\D$ and continuous in $\overline{\D}$. We extend this to arbitrary Lipschitz functions on the unit circle with the cost of one of the contractions being strict and the constant being dependent on the strict contraction. 

The  article is structured as follows.	Section \ref{TheMainResult} deals with Lipschitz estimates for a pair of contractions when one of them is strict.  In Section \ref{Sec3}, we establish the modified Krein trace formula. In Section \ref{DOI}, we will give a brief discussion on the double operator integral. En route, we establish the operator Lipschitz estimate for pairs of unitaries via the double operator integral technique. We end this note by proposing an open problem in Section \ref{Sec4}.

\section{The Main Result} \label{TheMainResult}

As is well-known, there is a trade off between the conditions on the operators and the conditions on the functions when proving perturbation estimates. The following result has fewer conditions on the operators whereas Theorem \ref{Thm3} has fewer conditions on the functions involved.
\begin{ppsn}\label{ppsn1}
	Let $T_0$ and $T_1$ be two contractions on $\hilh$ such that $T_1-T_0\in\sph$, $1\leq p\le\infty$. Let $f$ be a function on the unit circle $\cir$ such that $\sum\limits_{n\in\Z}|n\hat{f}(n)|<\infty$. Then
	\begin{align*}
		\|f(T_1)-f(T_0)\|_p\leq~\left(\sum\limits_{n\in\Z\setminus \{0\}}|n\hat{f}(n)|\right) \|T_1-T_0\|_p.
	\end{align*}
	
\end{ppsn}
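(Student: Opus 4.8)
The plan is to reduce everything to a telescoping estimate at the level of monomials, using the fact that $f(T)=P_\hilh f(U_T)|_\hilh$ and that the functional calculus is linear. Write $f=\sum_{n\in\Z}\hat f(n)z^n$; the hypothesis $\sum_{n\in\Z}|n\hat f(n)|<\infty$ guarantees that this series, as well as the formally differentiated one, converges absolutely and uniformly on $\cir$, so by linearity and continuity of the functional calculus (which follows from the von Neumann inequality \eqref{vN}) we get $f(T_1)-f(T_0)=\sum_{n\in\Z}\hat f(n)\bigl(T_1^{(n)}-T_0^{(n)}\bigr)$ in $\sph$-norm, where $T^{(n)}$ denotes $T^n$ for $n\ge 0$ and $(T^*)^{|n|}$ for $n<0$. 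Hence it suffices to bound $\|T_1^{(n)}-T_0^{(n)}\|_p$ by $|n|\,\|T_1-T_0\|_p$ for every $n\in\Z$.

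For $n\ge 1$ this is the standard telescoping identity
\[
T_1^n-T_0^n=\sum_{k=0}^{n-1}T_1^{k}\,(T_1-T_0)\,T_0^{n-1-k},
\]
and since $\|T_j\|\le 1$ each summand has $\sph$-norm at most $\|T_1-T_0\|_p$, giving $\|T_1^n-T_0^n\|_p\le n\,\|T_1-T_0\|_p$. For $n\le -1$, apply the same identity to the adjoints: $\|(T_1^*)^{m}-(T_0^*)^{m}\|_p=\|T_1^m-T_0^m\|_p\le m\,\|T_1-T_0\|_p$ with $m=|n|$, using that $\|S^*\|_p=\|S\|_p$ and $T_1^*-T_0^*=(T_1-T_0)^*$. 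The $n=0$ term contributes nothing. Summing, $\|f(T_1)-f(T_0)\|_p\le\sum_{n\in\Z\setminus\{0\}}|\hat f(n)|\cdot|n|\,\|T_1-T_0\|_p$, which is the claimed bound.

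The only genuinely delicate point is justifying the passage from the Fourier series of $f$ to the series for $f(T_1)-f(T_0)$ in the $\sph$-norm rather than merely in operator norm. In operator norm this is immediate from \eqref{vN} applied to the tail $\sum_{|n|>N}\hat f(n)z^n$. To upgrade to $\sph$-norm one notes that each partial-sum difference $\sum_{|n|\le N}\hat f(n)(T_1^{(n)}-T_0^{(n)})$ lies in $\sph$ with norm controlled by $\sum_{0<|n|\le N}|n\hat f(n)|\,\|T_1-T_0\|_p$ via the monomial estimates above, so the partial sums form a Cauchy sequence in $\sph$; its limit must agree with the operator-norm limit $f(T_1)-f(T_0)$. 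I expect this convergence bookkeeping to be the main (though routine) obstacle; the algebraic core is just the telescoping identity together with $\|T_j\|\le 1$.
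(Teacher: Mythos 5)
Your proof is correct and follows essentially the same route as the paper: expand $f$ in its Fourier series, apply the telescoping identity $T_1^n-T_0^n=\sum_{k=0}^{n-1}T_1^{k}(T_1-T_0)T_0^{n-1-k}$ (and its adjoint version for negative indices), and use $\|T_j\|\le 1$ to bound each summand by $\|T_1-T_0\|_p$. The extra paragraph on upgrading the convergence from operator norm to $\mathcal{S}_p$-norm is a sound (if routine) piece of bookkeeping that the paper leaves implicit.
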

\begin{proof}
	The conclusion follows from the equality
	\begin{align*}
		&f(T_1)-f(T_0)\\
		=&\sum\limits_{n=1}^{\infty}\hat{f}(-n)\left({T_1^*}^n-{T_0^*}^n\right)+\sum\limits_{n=1}^{\infty}\hat{f}(n)\left(T_1^n-T_0^n\right)\\
		=&\sum\limits_{n=1}^{\infty}\hat{f}(-n)\sum\limits_{k=0}^{n-1}{T_1^*}^k\left({T_1^*}-{T_0^*}\right){T_0^*}^{n-k-1}+\sum\limits_{n=1}^{\infty}\hat{f}(n)\sum\limits_{k=0}^{n-1}{T_1}^k\left({T_1}-{T_0}\right){T_0}^{n-k-1}.\\
	\end{align*}
	
\end{proof}

In our journey towards getting results with fewer conditions on the function, we first obtain a preliminary result by restricting the class of operators.
\begin{lma} If $f\in\lip(\cir)$ satisfies $\sum\limits_{n\in\Z}|n\hat{f}(n)|<\infty$, and if both $T_0$ and $T_1$ are strict contractions such that $T_1-T_0\in \mathcal{S}_p(\hilh)$ for $1\leq p\le\infty$, then 
	\begin{align*}
		\|f(T_1)-f(T_0)\|_p\leq~\sqrt{2}\,\|f\|_{\lip(\cir)}\,(1- \max\{\|T_1\|, \|T_0\|\}^2)^{-1/2}\|T_1-T_0\|_p.
	\end{align*}
\end{lma}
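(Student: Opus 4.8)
The plan is to sidestep the unitary dilation almost entirely and estimate the power series of the functional calculus term by term. Since $\sum_{n\in\Z}|n\hat f(n)|<\infty$ forces $\sum_{n\in\Z}|\hat f(n)|<\infty$, for any unitary dilation $U_T$ the series $\sum_n\hat f(n)U_T^n$ converges in operator norm to $f(U_T)$; compressing to $\hilh$ and using $T^n=P_\hilh U_T^n|_\hilh$ yields
\begin{align*}
	f(T)=P_\hilh f(U_T)|_\hilh=\sum_{n=0}^{\infty}\hat f(n)\,T^n+\sum_{n=1}^{\infty}\hat f(-n)\,(T^*)^n,
\end{align*}
with norm convergence in $\bh$. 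Subtracting the analogous expansions for $T_1$ and $T_0$, the $n=0$ terms cancel, and we are left with a sum of differences $T_1^n-T_0^n$ and $(T_1^*)^n-(T_0^*)^n$ weighted by $\hat f(\pm n)$.

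Next I would telescope each difference as in Proposition \ref{ppsn1}: $T_1^n-T_0^n=\sum_{k=0}^{n-1}T_1^k(T_1-T_0)T_0^{n-1-k}$, together with the adjoint analogue. Putting $r:=\max\{\|T_1\|,\|T_0\|\}<1$ and using submultiplicativity of the Schatten norm together with $\|T_1^*-T_0^*\|_p=\|T_1-T_0\|_p$, each summand of index $n$ has $\|\cdot\|_p$ at most $n\,r^{\,n-1}\|T_1-T_0\|_p$, so that
\begin{align*}
	\|f(T_1)-f(T_0)\|_p\le\Bigl(\sum_{n\in\Z\setminus\{0\}}|n\hat f(n)|\,r^{\,|n|-1}\Bigr)\,\|T_1-T_0\|_p.
\end{align*}

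Finally I would estimate the scalar sum by Cauchy--Schwarz applied to the sequences $\bigl(|n\hat f(n)|\bigr)_n$ and $\bigl(r^{|n|-1}\bigr)_n$: the geometric factor contributes $\bigl(\sum_{n\ne0}r^{2(|n|-1)}\bigr)^{1/2}=\sqrt2\,(1-r^2)^{-1/2}$, while for the other factor, since $f$ is Lipschitz the map $\theta\mapsto f(e^{i\theta})$ is absolutely continuous with $n$-th Fourier coefficient of its derivative equal to $in\hat f(n)$, whence by Parseval $\sum_{n\in\Z}|n\hat f(n)|^2\le\|f\|_{\lipp}^2\le\|f\|_{\lip(\cir)}^2$. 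Multiplying the three bounds gives the claimed inequality.

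The only point that needs real care is this last identification $\sum_n|n\hat f(n)|^2\le\|f\|_{\lip(\cir)}^2$, which uses that a Lipschitz function on the circle is absolutely continuous with essentially bounded derivative; everything else is elementary manipulation. It is worth noting that the dilation enters here solely to justify the norm-convergent series for $f(T)$, so no finer property of the Sch\"affer construction is used at this stage.
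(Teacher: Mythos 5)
Your proof is correct and follows essentially the same route as the paper's: telescope $T_1^n-T_0^n$ as in Proposition \ref{ppsn1}, bound each term by $n\,r^{n-1}\|T_1-T_0\|_p$ with $r=\max\{\|T_1\|,\|T_0\|\}$, and apply Cauchy--Schwarz to split off the geometric factor $\sqrt2\,(1-r^2)^{-1/2}$ from $\bigl(\sum_n|n\hat f(n)|^2\bigr)^{1/2}=\|f'\|_2\le\|f\|_{\lip(\cir)}$. Your only addition is the explicit justification that the absolutely convergent Fourier series passes through the dilation to give a norm-convergent expansion of $f(T)$, a point the paper leaves implicit.
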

\begin{proof}
	Suppose $0\leq \|T_0\|\leq\|T_1\|<1$.  Using the same equality as in the proof of the last lemma,
	\begin{align*}
		\|f(T_1)-f(T_0)\|_p\leq 
		& \left(\sum\limits_{n\in\Z\backslash\{0\}}|n\hat{f}(n)|\|T_1\|^{|n|-1}\right)\|T_1-T_0\|_p\\
		\leq&~\sqrt{2}\left[\sum\limits_{n\in\Z\backslash\{0\}}|n\hat{f}(n)|^2\right]^{1/2}\left[\sum\limits_{n=0}^{\infty}\|T_1\|^{2n}\right]^{1/2}\|T_1-T_0\|_p\\
		=&~\sqrt{2}\,\|f'\|_2\,\left(1-\|T_1\|^2\right)^{-1/2}\|T_1-T_0\|_p\\
		\le&~\sqrt{2}\,\|f\|_{\lip(\cir)}\,\left(1-\|T_1\|^2\right)^{-1/2}\|T_1-T_0\|_p.
	\end{align*}
\end{proof}

\begin{lma}\label{Lem2}
	Let $A$ and $B$ be two positive contractions on $\mathcal{H}$. If $B\geq \delta I> 0$ for some positive $\delta$ and if for any $p\in[1,\infty]$, $\left(A^2-B^2\right)\in \mathcal{S}_p(\hilh)$, then $\left(A-B\right)\in\mathcal{S}_p(\hilh)$, and 
	\begin{align*}
		\|A-B\|_p\leq \dfrac{1}{\delta}\|A^2-B^2\|_p.
	\end{align*}
\end{lma}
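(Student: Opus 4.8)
The plan is to reduce the inequality $\|A-B\|_p \le \frac{1}{\delta}\|A^2-B^2\|_p$ to a statement about the map $X \mapsto AX + XB$ being invertible with controlled inverse on $\mathcal{S}_p(\hilh)$. The key algebraic identity is $A^2 - B^2 = A(A-B) + (A-B)B$, so if I write $X = A-B$, then $A^2-B^2 = AX + XB$. Since $A, B$ are positive, the operator $\mathcal{L}_{A,B} \colon X \mapsto AX + XB$ on $\mathcal{B}(\hilh)$ (and on each $\mathcal{S}_p(\hilh)$) is the sum of a left-multiplication by the positive operator $A$ and a right-multiplication by the positive operator $B$; these two commute with each other, and each is a positive operator on the relevant Banach (indeed, for $p=2$, Hilbert) space of operators. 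The point is that $\mathcal{L}_{A,B}$ is bounded below by $\delta$ because $B \ge \delta I$.

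First I would make the spectral/positivity argument precise. For $p=2$, $L_A \colon X\mapsto AX$ and $R_B\colon X \mapsto XB$ are commuting positive operators on the Hilbert space $\mathcal{S}_2(\hilh)$, with $R_B \ge \delta I$ (since $\langle XB, X\rangle_2 = \Tr(B X^*X) \ge \delta \Tr(X^*X)$ using $B\ge\delta I$ and $X^*X\ge 0$), and $L_A \ge 0$; hence $\mathcal{L}_{A,B} = L_A + R_B \ge \delta I$ on $\mathcal{S}_2(\hilh)$, which gives $\|X\|_2 \le \frac1\delta \|\mathcal{L}_{A,B}X\|_2$. For general $p$, I would instead argue without an inner product: using the integral representation $X = \int_0^\infty e^{-tA} (AX+XB) e^{-tB}\, dt$, valid because $A\ge 0$ and $B \ge \delta I>0$ guarantee the integral converges (the integrand decays like $e^{-t\delta}$ in norm since $\|e^{-tA}\|\le 1$ and $\|e^{-tB}\| \le e^{-t\delta}$). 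Then $\|X\|_p \le \int_0^\infty \|e^{-tA}\|\,\|AX+XB\|_p\,\|e^{-tB}\|\,dt \le \|A^2-B^2\|_p \int_0^\infty e^{-t\delta}\,dt = \frac1\delta\|A^2-B^2\|_p$, using the ideal property $\|CYD\|_p \le \|C\|\,\|Y\|_p\,\|D\|$ and substituting $Y = AX+XB = A^2-B^2$.

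To justify the integral formula, I would check that $\frac{d}{dt}\big(e^{-tA} X e^{-tB}\big) = -e^{-tA}(AX+XB)e^{-tB}$, and that $e^{-tA}Xe^{-tB} \to 0$ in $\mathcal{S}_p$ as $t\to\infty$ (again from the exponential decay of $\|e^{-tB}\|$) while equalling $X$ at $t=0$; integrating from $0$ to $\infty$ yields $X = \int_0^\infty e^{-tA}(AX+XB)e^{-tB}\,dt$. The membership $A-B \in \mathcal{S}_p(\hilh)$ follows at once, since the right-hand side, with $AX+XB$ replaced by the given element $A^2-B^2 \in \mathcal{S}_p(\hilh)$, is a Bochner integral of $\mathcal{S}_p$-valued functions with absolutely convergent norm, hence lies in $\mathcal{S}_p(\hilh)$; and it must equal $A-B$ because the identity $A(A-B)+(A-B)B = A^2-B^2$ holds in $\mathcal{B}(\hilh)$ and the integral representation applied to the bounded operator $X=A-B$ reproduces it.

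The main obstacle is purely a matter of care rather than depth: ensuring the operator-valued integral genuinely converges in $\mathcal{S}_p$-norm (not just weakly) and that differentiation under the relevant limits is legitimate, i.e. that $t\mapsto e^{-tA}Xe^{-tB}$ is $\mathcal{S}_p$-norm differentiable. Both follow from standard facts about the norm-continuity and differentiability of $t\mapsto e^{-tA}$ for bounded positive $A$ together with the ideal inequalities for $\|\cdot\|_p$, but they should be stated explicitly. An alternative that sidesteps the integral entirely is to diagonalize: approximate by finite-rank $A,B$, or use the spectral theorem to realize $\mathcal{L}_{A,B}$ as multiplication by $a + b \ge 0 + \delta = \delta$ on a suitable direct-integral space — conceptually clean for $p=2$ but less transparent for general $p$, so I would present the integral argument as the main line.
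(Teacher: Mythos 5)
Your proof is correct and follows essentially the same route as the paper: both rest on the integral representation $A-B=\int_0^{\infty}e^{-tA}\,(A^2-B^2)\,e^{-tB}\,dt$ together with the bounds $\|e^{-tA}\|\le 1$ and $\|e^{-tB}\|\le e^{-t\delta}$. The only difference is that the paper cites this identity from Lemma 2.1 of \cite{ChSi21}, whereas you derive it directly by differentiating $t\mapsto e^{-tA}Xe^{-tB}$ — a harmless (indeed self-contained) variant of the same argument.
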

\begin{proof}
	Lemma 2.1 in \cite{ChSi21} tells us that we have the following identity in  $\mathcal{B}(\mathcal{H})$: 
	\begin{equation}\label{eq12}
		A-B =  \int_0^{+\infty} \textup{e}^{-tA}~\big(A^2-B^2\big) ~\textup{e}^{-tB} ~dt,
	\end{equation}  
	where the right hand side  exists as an improper strong Riemann-Bochner integral. The hypothesis in the statement implies $B$ has a trivial kernel and that is why we can use Lemma 2.1 of \cite{ChSi21}. Moreover, the integral in \eqref{eq12} converges in $\sph$-topology since $\left\| \textup{e}^{-tA}\right\|\leq 1$, $\left\| \textup{e}^{-tB}\right\|\leq  \textup{e}^{-\delta t}$. Hence 
	\begin{align*}
		\|A-B\|_p\leq\int_{0}^{+\infty}\|A^2-B^2\|_p\,e^{-t\delta}dt= \dfrac{1}{\delta}\|A^2-B^2\|_p.
	\end{align*}
	This completes the proof of the lemma.    
\end{proof}	 

Since $T$ is a contraction, $I - T^*T$ and $I - TT^*$ are positive operators, and hence we can take their positive square roots: {\em the defect operators} $D_{T}=(I-T^{*}T)^{1/2}$ and $D_{T^{*}}=(I-TT^{*})^{1/2}$. The following corollary is an immediate consequence of Lemma \ref{Lem2}.
\begin{crlre}\label{Crl1}
	Let $T_{0}$ and $T_{1}$ be two contractions on $\hilh$ such that one of them is strict. If $T_{1}-T_{0}\in\sph$, then $D_{T_{1}}-D_{T_{0}}$ and $D_{T_{1}^{*}}-D_{T_{0}^{*}}$ both are in $\sph$.
\end{crlre}

Sch\"affer \cite{Schaffer} found an explicit power unitary dilation of a contraction $T$ on a Hilbert space $\hilh$. Let $\ell_2(\hilh)=\hilh\oplus\hilh\oplus\cdots$. The dilation space is $\ell_2(\hilh)\oplus \hilh\oplus \ell_2(\hilh)$ and the unitary operator is 
\begin{align}\label{newschafferdil}
	U_{T}=\begin{blockarray}{ccccc}
		\ell_2(\hilh) & \hilh & \ell_2(\hilh) &  \\[4pt]
		\begin{block}{[ccc]cc}
			S^*& 0& 0 & \ell_2(\hilh) \\[3pt]
			D_{T^*}P_\hilh& T& 0 & \hilh\\[3pt]
			-T^*P_\hilh& D_{T}& S& \ell_2(\hilh)\\[3pt]
		\end{block}
	\end{blockarray},
\end{align}
where $S$ is the unilateral shift on $\ell_2(\hilh)$ of multiplicity $\dim(\hilh)$ and $P_\hilh$ is the orthogonal projection from $\ell_2(\hilh)$ onto $\hilh\oplus 0\oplus0\oplus\cdots$.\\

\begin{thm}\label{Thm2}
	Let $1\leq p<\infty$ and $T_0,T_1$ be two contractions on $\hilh$ having the following properties:
	\begin{enumerate}[{\normalfont(i)}]
		\item\label{Case1} $T_0$ is strict.
		\item\label{Case2} $T_1-T_0\in\sph$.
	\end{enumerate}Let $U_{T_0}$ and $U_{T_1}$ on $\ell_2(\hilh)\oplus\mathcal{H}\oplus\ell_2(\hilh)$ be Sch\"affer matrix dilations for $T_0$ and $T_1$ respectively. Then  $$\|U_{T_1}-U_{T_0}\|_p\le~2^{\frac{1}{p}}\left[\frac{2+(1-\|T_0\|^2)^{1/2}}{(1-\|T_0\|^2)^{1/2}}\right]\|T_1-T_0\|_p.$$ 
\end{thm}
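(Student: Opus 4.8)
The plan is to write out the Schäffer dilation matrices for $T_0$ and $T_1$ explicitly using \eqref{newschafferdil} and subtract them entrywise. Since the shift $S$ and the projections $P_\hilh$ are the same in both dilations, the difference $U_{T_1}-U_{T_0}$ has only four nonzero blocks, arranged in a $3\times 3$ operator matrix supported on the middle row and the last row:
\begin{align*}
U_{T_1}-U_{T_0}=\begin{blockarray}{ccccc}
\ell_2(\hilh) & \hilh & \ell_2(\hilh) &  \\[4pt]
\begin{block}{[ccc]cc}
0& 0& 0 & \ell_2(\hilh) \\[3pt]
(D_{T_1^*}-D_{T_0^*})P_\hilh& T_1-T_0& 0 & \hilh\\[3pt]
-(T_1^*-T_0^*)P_\hilh& D_{T_1}-D_{T_0}& 0& \ell_2(\hilh)\\[3pt]
\end{block}
\end{blockarray}.
\end{align*}
By Corollary \ref{Crl1}, each of the four entries lies in $\sph$, so $U_{T_1}-U_{T_0}\in\sph$ and we only need to bound its $\sph$-norm.

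Next I would organize this difference as a sum of two operators: the ``second column'' part, call it $X$, having entries $T_1-T_0$ and $D_{T_1}-D_{T_0}$ in the $\hilh$-column, and the ``first column'' part, call it $Y$, having entries $(D_{T_1^*}-D_{T_0^*})P_\hilh$ and $-(T_1^*-T_0^*)P_\hilh$ in the $\ell_2(\hilh)$-column. Because $X$ maps only out of the copy of $\hilh$ and $Y$ maps only out of (the first summand of) the left $\ell_2(\hilh)$, these have orthogonal initial spaces; combined with a Clarkson–McCarthy-type inequality this should give $\|X+Y\|_p^p\le \|X\|_p^p+\|Y\|_p^p$, or at worst $\|X+Y\|_p\le 2^{1/p}\max\{\|X\|_p,\|Y\|_p\}$, which is where the factor $2^{1/p}$ in the statement comes from. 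For the column operators themselves, I would use that a two-rowed column operator $\begin{bsmallmatrix}A\\ B\end{bsmallmatrix}$ satisfies $\left\|\begin{bsmallmatrix}A\\ B\end{bsmallmatrix}\right\|_p\le \|A\|_p+\|B\|_p$ (triangle inequality, since it is $\begin{bsmallmatrix}A\\0\end{bsmallmatrix}+\begin{bsmallmatrix}0\\B\end{bsmallmatrix}$ and each summand has the same $\sph$-norm as $A$, resp.\ $B$), or the sharper $\ell_p$-bound; and $P_\hilh$ is a contraction, so it can be dropped.

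It then remains to control $\|D_{T_1}-D_{T_0}\|_p$ and $\|D_{T_1^*}-D_{T_0^*}\|_p$ in terms of $\|T_1-T_0\|_p$. This is the real content. Applying Lemma \ref{Lem2} with $A=D_{T_1}$, $B=D_{T_0}$, and $\delta=(1-\|T_0\|^2)^{1/2}$ (legitimate since $D_{T_0}^2=I-T_0^*T_0\ge (1-\|T_0\|^2)I>0$ as $T_0$ is strict), gives
\begin{align*}
\|D_{T_1}-D_{T_0}\|_p\le \frac{1}{(1-\|T_0\|^2)^{1/2}}\,\|D_{T_1}^2-D_{T_0}^2\|_p=\frac{1}{(1-\|T_0\|^2)^{1/2}}\,\|T_0^*T_0-T_1^*T_1\|_p,
\end{align*}
and then $T_0^*T_0-T_1^*T_1=T_0^*(T_0-T_1)+(T_0^*-T_1^*)T_1$ together with $\|T_0\|<1$, $\|T_1\|\le 1$ yields $\|D_{T_1}-D_{T_0}\|_p\le 2(1-\|T_0\|^2)^{-1/2}\|T_1-T_0\|_p$; the same bound holds for the $D_{T^*}$ terms via $D_{T_0^*}^2\ge(1-\|T_0\|^2)I$. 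Feeding these into the column estimates, each column operator has $\sph$-norm at most $\big(1+2(1-\|T_0\|^2)^{-1/2}\big)\|T_1-T_0\|_p=\big((1-\|T_0\|^2)^{1/2}+2\big)(1-\|T_0\|^2)^{-1/2}\|T_1-T_0\|_p$, and the orthogonality/$2^{1/p}$ step assembles this into the claimed bound. I expect the main obstacle to be the bookkeeping in the orthogonality argument — being careful that the two ``column pieces'' really do act on orthogonal subspaces so that the Clarkson-type inequality applies and produces exactly $2^{1/p}$ rather than a cruder constant — and making sure Lemma \ref{Lem2} is invoked with a valid lower bound $\delta$ for both $D_{T_0}$ and $D_{T_0^*}$.
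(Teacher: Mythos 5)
Your overall skeleton matches the paper's: write out the Sch\"affer difference matrix explicitly, reduce everything to $\|D_{T_1}-D_{T_0}\|_p$ and $\|D_{T_1^*}-D_{T_0^*}\|_p$, and control those via Lemma \ref{Lem2} with $\delta=(1-\|T_0\|^2)^{1/2}$ together with the factorization $T_0^*T_0-T_1^*T_1=T_0^*(T_0-T_1)+(T_0^*-T_1^*)T_1$; that part is exactly what the paper does, including the resulting bound $\|D_{T_1}-D_{T_0}\|_p\le 2(1-\|T_0\|^2)^{-1/2}\|T_1-T_0\|_p$. The gap is in your assembly step. Your pieces $X$ and $Y$ are the two \emph{columns} of the difference matrix; they have orthogonal initial spaces but their final spaces both sit inside the same two rows, so they are a row operator $[\,Y'\ \ X'\,]$ rather than a direct sum. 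For such a sum one only has $\|X+Y\|_p=\|XX^*+YY^*\|_{p/2}^{1/2}$, and the inequality $\|X+Y\|_p^p\le\|X\|_p^p+\|Y\|_p^p$ (equivalently your ``at worst $2^{1/p}\max$'') \emph{fails} for $p>2$: take $X=\left[\begin{smallmatrix}1&0\\0&0\end{smallmatrix}\right]$, $Y=\left[\begin{smallmatrix}0&1\\0&0\end{smallmatrix}\right]$, which have orthogonal initial spaces, yet $\|X+Y\|_p=\sqrt{2}>2^{1/p}=(\|X\|_p^p+\|Y\|_p^p)^{1/p}$ for $p>2$. The best general bound in that regime is $(\|X\|_p^2+\|Y\|_p^2)^{1/2}\le\sqrt{2}\max\{\|X\|_p,\|Y\|_p\}$, which produces a constant strictly larger than the claimed $2^{1/p}\bigl[(2+\delta)/\delta\bigr]$ when $p>2$. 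So your route proves the theorem only for $1\le p\le 2$.

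The paper avoids this by grouping the four nonzero entries differently: one piece carries $T_1-T_0$ at position $(2,2)$ and $(T_0^*-T_1^*)P_\hilh$ at position $(3,1)$, the other carries $(D_{T_1^*}-D_{T_0^*})P_\hilh$ at $(2,1)$ and $D_{T_1}-D_{T_0}$ at $(3,2)$. Each piece has its entries in distinct rows \emph{and} distinct columns, hence is unitarily equivalent to a genuine direct sum, for which $\|\cdot\|_p^p$ is exactly additive over the entries for every $p$; the two pieces are then combined by the ordinary triangle inequality. This yields $2^{1/p}\|T_1-T_0\|_p+\bigl[\|D_{T_1^*}-D_{T_0^*}\|_p^p+\|D_{T_1}-D_{T_0}\|_p^p\bigr]^{1/p}$ and hence the stated constant for all $1\le p<\infty$. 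If you regroup your decomposition this way (pairing the ``nice'' entry of one column with the ``defect'' entry of the other), the rest of your argument goes through verbatim.
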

\begin{proof}
	The difference of $U_{T_0}$ and $U_{T_1}$ is given by
	\begin{align}\label{eq13}
		\nonumber &U_{T_1}-U_{T_0}\\
		\nonumber=&
		\begin{bmatrix}
			0&0&0\\
			(D_{T_1^*}-D_{T_0^*})P_\hilh&T_1-T_0&0\\
			(T_0^*-T_1^*)P_\hilh&D_{T_1}-D_{T_0}&0
		\end{bmatrix} \\
		=&\begin{bmatrix}
			0&0&0\\
			0&T_1-T_0&0\\
			(T_0^*-T_1^*)P_\hilh&0&0
		\end{bmatrix} +\begin{bmatrix}
			0&0&0\\
			(D_{T_1^*}-D_{T_0^*})P_\hilh&0&0\\
			0&D_{T_1}-D_{T_0}&0
		\end{bmatrix} .
	\end{align}
	From the identity \eqref{eq13} and the Corollary \ref{Crl1}, we arrive at
	\begin{align}
		\label{eq14}\|U_{T_1}-U_{T_0}\|_p\leq 2^{\frac{1}{p}}~\|T_1-T_0\|_p+\Big[\|D_{T_1^*}-D_{T_0^*}\|_p^p+\|D_{T_1}-D_{T_0}\|_p^p\Big]^{\frac{1}{p}}.
	\end{align}
	Since $T_0$ is a strict contraction, the spectrum $\sigma(T_0^*T_0)$ of $T_0^*T_0$  is contained in $[0,\|T_0^*T_0\|]=\left[0,\|T_0\|^2\right]\subseteq[0,1)$. Let $\delta=\left(1-\|T_0\|^2\right)^{1/2}>0$. By spectral theorem, we have $D_{T_0}\geq \delta I>0$. Applying Lemma \ref{Lem2} for the pair $(A=D_{T_1},B=D_{T_0})$ we obtain
	\begin{align}
		\label{eq15}\|D_{T_1}-D_{T_0}\|_p\leq \frac{2}{\left(1-\|T_0\|^2\right)^{1/2}}\|T_1-T_0\|_p.
	\end{align}
	By a similar argument we also have
	\begin{align}
		\label{eq16}\|D_{T^*_1}-D_{T^*_0}\|_p\leq \frac{2}{\left(1-\|T_0\|^2\right)^{1/2}}\|T_1-T_0\|_p.
	\end{align}
	Therefore, using \eqref{eq15} and \eqref{eq16} in \eqref{eq14} we derive
	\[\|U_{T_1}-U_{T_0}\|_p\le ~2^{\frac{1}{p}}\left[\frac{2+(1-\|T_0\|^2)^{1/2}}{(1-\|T_0\|^2)^{1/2}}\right]\|T_1-T_0\|_p.\]
	This completes the proof.
\end{proof}

The above implies the main result.

\begin{thm}\label{Thm3}
	Let $f:\cir\to\C$ be a Lipschitz function on $\cir$ and $1<p<\infty$. Then for any pair of contractions $(T_0,T_1)$ satisfying $\|T_{0}\|<1$ and the hypothesis $T_1-T_0\in\sph$, one has the following estimate:
	\begin{align*}
		\|f(T_1)-f(T_0)\|_p\leq  ~2^{\frac{1}{p}}\left[\frac{2+(1-\|T_0\|^2)^{1/2}}{(1-\|T_0\|^2)^{1/2}}\right]~d_p \|f\|_{\lipp}\|T_1-T_0\|_p,
	\end{align*} 
	where $d_p$ and $ \|f\|_{\lipp}$ are as in \eqref{d_p}.
\end{thm}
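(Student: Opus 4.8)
The strategy is to reduce the estimate for contractions to the known unitary estimate \eqref{d_p} via the Sch\"affer dilation, using Theorem \ref{Thm2} to control the cost of passing to the dilation space. First I would fix Sch\"affer dilations $U_{T_0}$ and $U_{T_1}$ of $T_0$ and $T_1$ acting on the common dilation space $\hilk = \ell_2(\hilh)\oplus\hilh\oplus\ell_2(\hilh)$, as in \eqref{newschafferdil}. Since $f(T_j) = P_\hilh f(U_{T_j})|_\hilh$ by the functional calculus set up in the introduction (and this is independent of the choice of dilation), compression is norm-decreasing on every Schatten class, so
\[
  \|f(T_1)-f(T_0)\|_p = \|P_\hilh\big(f(U_{T_1})-f(U_{T_0})\big)|_\hilh\|_p \le \|f(U_{T_1})-f(U_{T_0})\|_p.
\]
This is the key reduction: it replaces the (non-multiplicative, dilation-defined) contraction functional calculus by the honest continuous functional calculus of the two unitaries $U_{T_0}, U_{T_1}$.

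Next I would invoke the unitary Lipschitz estimate \eqref{d_p}: provided $U_{T_1}-U_{T_0}\in\sph$, one has
\[
  \|f(U_{T_1})-f(U_{T_0})\|_p \le d_p\,\|f\|_{\lipp}\,\|U_{T_1}-U_{T_0}\|_p.
\]
The hypothesis $T_1-T_0\in\sph$ together with $\|T_0\|<1$ puts us exactly in the setting of Theorem \ref{Thm2} (its condition (i) is the strictness of $T_0$ and (ii) is $T_1-T_0\in\sph$), which simultaneously guarantees $U_{T_1}-U_{T_0}\in\sph$ (so that \eqref{d_p} is applicable) and supplies the bound
\[
  \|U_{T_1}-U_{T_0}\|_p \le 2^{\frac1p}\left[\frac{2+(1-\|T_0\|^2)^{1/2}}{(1-\|T_0\|^2)^{1/2}}\right]\|T_1-T_0\|_p.
\]
Chaining the three displayed inequalities gives precisely the asserted estimate, with the constant $2^{1/p}\big[(2+(1-\|T_0\|^2)^{1/2})/(1-\|T_0\|^2)^{1/2}\big]d_p$ in front of $\|f\|_{\lipp}\|T_1-T_0\|_p$.

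Since Theorem \ref{Thm2} and the unitary estimate \eqref{d_p} are both already available, there is essentially no remaining obstacle; the only point requiring a word of care is that \eqref{d_p} is stated for Lipschitz functions and we must make sure the membership $U_{T_1}-U_{T_0}\in\sph$ (needed for \eqref{d_p} to even make sense) is in hand before applying it — but this is delivered by Theorem \ref{Thm2}, via Corollary \ref{Crl1}. One could also remark that when $T_1-T_0\notin\sph$ the inequality is trivially true (both sides infinite, or the statement vacuous), so no generality is lost. This makes the proof a short concatenation, as the sentence ``The above implies the main result'' preceding the statement suggests.
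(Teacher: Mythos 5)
Your proposal is correct and follows essentially the same route as the paper's own proof: compress through the Sch\"affer dilation to get $\|f(T_1)-f(T_0)\|_p\le\|f(U_{T_1})-f(U_{T_0})\|_p$, apply the unitary Lipschitz estimate \eqref{d_p}, and bound $\|U_{T_1}-U_{T_0}\|_p$ via Theorem \ref{Thm2}. Your explicit remarks that the compression is Schatten-norm-decreasing and that $U_{T_1}-U_{T_0}\in\sph$ must be secured (via Corollary \ref{Crl1}) before invoking \eqref{d_p} are details the paper leaves implicit, but there is no substantive difference in approach.
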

\begin{proof}
	Let  $U_{T_0}$ and $U_{T_1}$ be the unitary dilations for $T_0$ and $T_1$ respectively in the Sch\"affer form. Since $f$ is a Lipschitz function on $\cir$, by Theorem \ref{Thm2}, we have 
	\begin{align*}
		\|f(T_1)-f(T_0)\|_p&\le \|f(U_{T_{1}})-f(U_{T_{0}})\|_p\\
		&\le ~2^{\frac{1}{p}}\left[\frac{2+(1-\|T_0\|^2)^{1/2}}{(1-\|T_0\|^2)^{1/2}}\right]~d_p \|f\|_{\lipp}\|T_1-T_0\|_p.
	\end{align*}
	This completes the proof.	
\end{proof}

The technique of using a unitary dilation allow us to extend the function class considered in \cite[Corollary 5.4.]{KiSh07} from $\mathcal{A}(\cir) \cap \lip(\cir)$ to $\lip(\cir)$.

\begin{thm}\label{Thm_kis}
	Let $f:\cir\to\C$ be a Lipschitz function on $\cir$ and $1< p<\infty$. Then for any pair of contractions $(T_0,T_1)$ satisfying $T_1-T_0\in\mathcal{S}_{p/2}(\hilh)$, we have the following estimate:
	\begin{align}\label{lipestnew}
		\|f(T_1)-f(T_0)\|_p\leq  k_p\, d_p\,  \|f\|_{\lipp}\max\{\|T_1-T_0\|_p,\, \|T_1-T_0\|_{p/2}^{1/2}\},
	\end{align}  
	where $d_p$, $\|f\|_{\lipp}$ are as in \eqref{d_p} and 
	\[
	k_p=2^{1/p}(1+\sqrt{2}) \text{ if } p\geq 2, \quad \text{and} \quad k_p= 2^{1/p}(1+2^{2/p}) \text{ if } p< 2.
	\]
\end{thm}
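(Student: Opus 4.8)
\textbf{Proof proposal for Theorem \ref{Thm_kis}.}

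The plan is to mimic the strategy already used for Theorem \ref{Thm3}, namely to pass to Sch\"affer unitary dilations $U_{T_0}, U_{T_1}$ of $T_0, T_1$, use $\|f(T_1)-f(T_0)\|_p \le \|f(U_{T_1})-f(U_{T_0})\|_p$ together with the unitary Lipschitz estimate \eqref{d_p}, and then bound $\|U_{T_1}-U_{T_0}\|_p$. The essential new point is that now \emph{neither} contraction is assumed strict, so Corollary \ref{Crl1} and Lemma \ref{Lem2} (which require a strict lower bound $B \ge \delta I$) are no longer available to control the defect-operator differences $D_{T_1}-D_{T_0}$ and $D_{T_1^*}-D_{T_0^*}$ linearly in $\|T_1-T_0\|_p$. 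First I would revisit the decomposition \eqref{eq13}: the ``diagonal'' contribution is still bounded by $2^{1/p}\|T_1-T_0\|_p$, so everything reduces to estimating $\big[\|D_{T_1^*}-D_{T_0^*}\|_p^p+\|D_{T_1}-D_{T_0}\|_p^p\big]^{1/p}$ without a defect lower bound.

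For that step I would invoke the operator Lipschitz property of the scalar function $t\mapsto \sqrt{1-t}$ (equivalently $x \mapsto \sqrt{x}$) on $[0,1]$, in the form of a H\"older-type Schatten estimate: for positive contractions $A,B$ one has a bound of the shape $\|A^{1/2}-B^{1/2}\|_p \le C\,\|A-B\|_{p/2}^{1/2}$, coming from the fact that $\sqrt{\cdot}$ is $(1/2)$-H\"older and the corresponding operator-H\"older inequality in Schatten norms (this is exactly the mechanism behind the $\|T_1-T_0\|_{p/2}^{1/2}$ term and the appearance of $\mathcal{S}_{p/2}$ in the hypothesis). Applying this with $A = I - T_1^*T_1$, $B = I - T_0^*T_0$ gives $\|D_{T_1}-D_{T_0}\|_p \le C\,\|T_1^*T_1 - T_0^*T_0\|_{p/2}^{1/2}$, and then $T_1^*T_1 - T_0^*T_0 = T_1^*(T_1-T_0) + (T_1^*-T_0^*)T_0$ together with $\|T_0\|,\|T_1\|\le 1$ and the ideal property of $\mathcal{S}_{p/2}$ yields $\|T_1^*T_1-T_0^*T_0\|_{p/2} \le 2\|T_1-T_0\|_{p/2}$. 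The same works for $D_{T_1^*}-D_{T_0^*}$ with $TT^*$ in place of $T^*T$. Feeding these into \eqref{eq13}/\eqref{eq14}, combining the two defect terms via the triangle inequality in $\ell_p$ (which produces the factor $2^{1/p}$), and then bounding $\max$ of the two resulting pieces by $\max\{\|T_1-T_0\|_p,\|T_1-T_0\|_{p/2}^{1/2}\}$, one arrives at $\|U_{T_1}-U_{T_0}\|_p \le k_p \max\{\|T_1-T_0\|_p,\|T_1-T_0\|_{p/2}^{1/2}\}$ with $k_p$ as stated; the bookkeeping of the constant $C$ (which is $\sqrt 2$ for $p\ge 2$ and $2^{2/p}$ for $p<2$, reflecting the two regimes of the operator-H\"older inequality for the square root) is what produces the two cases in the definition of $k_p$. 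Finally, composing with the unitary estimate \eqref{d_p} gives \eqref{lipestnew}.

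The main obstacle I anticipate is pinning down the correct operator-H\"older estimate for the square root in Schatten $p$-norm with the sharp constants in both ranges $p\ge 2$ and $1<p<2$ — i.e. justifying $\|A^{1/2}-B^{1/2}\|_p \le \sqrt 2\,\|A-B\|_{p/2}^{1/2}$ for $p \ge 2$ and $\|A^{1/2}-B^{1/2}\|_p \le 2^{2/p}\,\|A-B\|_{p/2}^{1/2}$ for $p<2$. For $p\ge 2$ this is classical (the square root is operator-H\"older of order $1/2$, and one can even use the Powers--St\o rmer / Birman--Koplienko--Solomyak type inequality $\|A^{1/2}-B^{1/2}\|_{2q}^2 \le \|A-B\|_q$), while for $1<p<2$ one needs a commutator/integral-representation argument or a reduction to the finite-dimensional case and interpolation, which is where the slightly worse constant $2^{2/p}$ enters. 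Everything else — the dilation reduction, the decomposition \eqref{eq13}, the ideal property of $\mathcal{S}_{p/2}$, and the final application of \eqref{d_p} — is routine and parallels the proof of Theorem \ref{Thm2} and Theorem \ref{Thm3}.
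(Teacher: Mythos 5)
Your overall route coincides with the paper's: reduce to Sch\"affer dilations, use $\|f(T_1)-f(T_0)\|_p\le\|f(U_{T_1})-f(U_{T_0})\|_p$ and \eqref{d_p}, and then bound $\|U_{T_1}-U_{T_0}\|_p$ by $k_p\max\{\|T_1-T_0\|_p,\|T_1-T_0\|_{p/2}^{1/2}\}$. The difference is in how that last bound is obtained: the paper does not prove it at all, it simply cites Proposition 5.2(ii) of \cite{KiSh07}, whereas you attempt to re-derive it from the decomposition \eqref{eq13} plus an operator-H\"older (Birman--Koplienko--Solomyak/Ando type) estimate for the square root. For $p\ge 2$ your chain is sound: the two blocks of \eqref{eq13} sit in disjoint rows and columns, the first contributes $2^{1/p}\|T_1-T_0\|_p$, and $\|D_{T_1}-D_{T_0}\|_p\le\|T_1^*T_1-T_0^*T_0\|_{p/2}^{1/2}\le\sqrt2\,\|T_1-T_0\|_{p/2}^{1/2}$ (the $\sqrt2$ actually comes from the factorization $T_1^*T_1-T_0^*T_0=T_1^*(T_1-T_0)+(T_1^*-T_0^*)T_0$, not from the H\"older step itself, which holds with constant $1$ for unitarily invariant norms), giving exactly $k_p=2^{1/p}(1+\sqrt2)$. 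This is almost certainly the content of the cited proposition, so your approach buys a self-contained argument at the price of re-proving a known lemma.

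The one genuine gap is the range $1<p<2$: there $p/2<1$, $\mathcal S_{p/2}$ is only a quasi-normed ideal, the triangle inequality and the Ando form of the BKS inequality are no longer available off the shelf, and you explicitly defer the estimate $\|A^{1/2}-B^{1/2}\|_p\le 2^{2/p}\|A-B\|_{p/2}^{1/2}$ to an unspecified ``commutator/integral-representation argument or interpolation.'' That deferred estimate is precisely the nontrivial half of \cite[Proposition 5.2]{KiSh07}, so in the quasi-Banach regime your proposal has not actually closed the argument; within the paper's own economy this entire step (for all $p$) is simply the citation. If you want a complete proof you must either supply the quasi-normed BKS-type inequality for the square root or, as the authors do, invoke the Kissin--Shulman result directly.
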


\begin{proof}
	Let $U_{T_0},U_{T_1}$ be two Sch\"affer unitary dilations for $T_0$ and $T_1$ respectively. By \eqref{d_p}, we have
	\begin{align*}
		\|f(T_1)-f(T_0)\|_p &\le \|f(U_{T_{1}})-f(U_{T_{0}})\|_p \\
		&\le~d_p \|f\|_{\lipp}\|U_{T_1}-U_{T_0}\|_p.
	\end{align*}
	Applying Proposition 5.2 (ii) of \cite{KiSh07} on the right-hand side of the above inequality, we obtain \eqref{lipestnew}.
\end{proof}

\section{Application: A modified Krein trace formula}\label{Sec3}
In this section, we apply Theorem \ref{Thm3} to prove the existence of the spectral shift function and the associated modified Krein trace formula for a pair of contractions differing by a Hilbert-Schmidt operator.

The spectral shift function introduced by Lifshitz in \cite{Li52} and generalized  by Krein guarantees that for an arbitrary pair of self-adjoint operators $A$ and $B$ with $A-B$ in the trace class ideal, there exists a unique integrable function $\xi$ on $\Omega=\R$  such that the trace formula
\begin{align}
	\label{Krein}&\Tr\left(f(A)-f(B)\right)=\int_{\Omega}f'(x)\,\xi(x)\,dx
\end{align}
holds if the Fourier transform of $f'$ is in $L^{1}(\R)$ (see \cite{Kr53}). The function $\xi$ is called the {\it spectral shift function} and the formula \eqref{Krein} is known as the {\it Krein trace formula}.

A similar result was obtained in \cite{Kr62} for pairs of unitary operators $(U, V)$ with $U-V \in\mathcal{S}_{1}(\hilh)$ and $\Omega=\cir$. The class of functions there needed to satisfy that $f'$ has an absolutely convergent Fourier series. More than five decades later, a better sufficient condition on $f$ was found in \cite[Theorem 4.1]{AlPe16}. They showed that \eqref{Krein} holds for any {\em operator Lipschitz function} $f$ on $\cir$, i.e., for those $f$ for which there is a constant $C_f$ such that 
\begin{equation*}
	\|f(U)-f(V)\|\le C_f \|U-V\|
\end{equation*}
for all unitary operators $U$ and $V$.  

The existence of a complex-valued integrable spectral shift function $\xi$ for a pair of contractions $(T_0, T_1)$ with trace class difference $T_1-T_0\in\mathcal{S}_1(\hilh)$ corresponding to all $f\in \mathcal{A}(\cir)$, which are operator Lipschitz on $\cir$, was proved in \cite{MN2015} (under an additional assumption $\rho(T_0)\cap \mathbb{D}\neq\emptyset$, where $\rho(T_0)$ is the resolvent set of $T_0$) and in \cite{MaNePe19} (in full generality). For more on the Krein trace formula, we refer to \cite{K1987,Lan1965,Ry1,Ry2,Ry3,Ry4}.

The following theorem, a higher-order version of which appeared in \cite[Theorem 4.5]{Sk17}, will be useful.
\begin{thm}\label{Thm8}
	Let $U$ and $V$ be two unitary operators acting on $\hilh$ and let
	\begin{align*}
		f\in\mathcal{F}_{1}(\cir):=\left\{f\in C^{1}(\cir):~\sum_{n\in\Z}|n\widehat{f}(n)|<\infty\right\}.
	\end{align*}	
	Let $U-V\in\mathcal{S}_{2}(\hilh)$. Then for all $Z\in\mathcal{S}_2(\hilh)$, there exists a function $\eta=\eta_{\,U,\,V,\,Z}\in L^{1}([0, 2\pi])$ satisfying
	\begin{align*}
		&\Tr\Big[(f(U)-f(V))Z\Big]=\int_{0}^{2\pi}f'(e^{it})\,\eta(t)\,dt,
	\end{align*}
	for every $f\in \mathcal{F}_{1}(\cir)$.
\end{thm}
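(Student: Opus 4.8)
The plan is to reduce the statement to the known case of a spectral shift for unitary perturbation problems and to construct the measure $\eta$ explicitly as a trace of a ``derivative'' of $f(U)$ in the direction of $Z$. First I would fix $f\in\mathcal F_1(\cir)$ and write $f(e^{it})=\sum_n \widehat f(n)e^{int}$, so that $f(U)=\sum_n\widehat f(n)U^n$ with the series converging absolutely in operator norm. Using the telescoping identity already employed in Proposition \ref{ppsn1},
\[
  U^n-V^n=\sum_{k=0}^{n-1}U^k(U-V)V^{n-k-1},\qquad n\ge 1,
\]
together with the analogous identity for $n<0$ written via $U^{-1}=U^*$, $V^{-1}=V^*$, one gets $f(U)-f(V)\in\mathcal S_2(\hilh)$ and hence $(f(U)-f(V))Z\in\mathcal S_1(\hilh)$ for every $Z\in\mathcal S_2(\hilh)$, so the left-hand side is well defined. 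The summability hypothesis $\sum_n|n\widehat f(n)|<\infty$ is exactly what makes the resulting norm estimate $\|f(U)-f(V)\|_2\le(\sum_n|n\widehat f(n)|)\|U-V\|_2$ finite, and this will be used again to justify interchanging sum and integral at the end.

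Next I would produce the functional $\Lambda_{U,V,Z}(f):=\Tr[(f(U)-f(V))Z]$ on $\mathcal F_1(\cir)$ and show it is given by integration against an $L^1$ function of $f'$. The cleanest route is the one used for the classical unitary SSF: consider, for each $n\in\Z$, the quantity $\Tr[(U^n-V^n)Z]$, apply the telescoping identity, and observe that cyclicity of the trace on $\mathcal S_1$ lets one rewrite $\sum_{k=0}^{n-1}\Tr[U^k(U-V)V^{n-k-1}Z]$. Introduce the spectral measures $E_U$ of $U$ and $E_V$ of $V$ and a scalar bimeasure $\nu(\cdot,\cdot)$ on $\cir\times\cir$ defined by $\nu(\Delta_1,\Delta_2)=\Tr[E_U(\Delta_1)(U-V)E_V(\Delta_2)Z]$; this is a measure of bounded total variation since $(U-V)\in\mathcal S_2$ and $Z\in\mathcal S_2$ force the relevant double operator integral to be a well-defined element of $\mathcal S_1$ (this is the standard Birman--Solomyak double operator integral machinery, which is anyway developed in Section \ref{DOI}). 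Then $\Tr[(U^n-V^n)Z]=\iint \bigl(\sum_{k=0}^{n-1}\lambda^k\mu^{\,n-1-k}\bigr)\,d\nu(\lambda,\mu)=\iint \frac{\lambda^n-\mu^n}{\lambda-\mu}\,d\nu(\lambda,\mu)$, with the divided difference interpreted as $n\lambda^{n-1}$ on the diagonal. Multiplying by $\widehat f(n)$, summing over $n$, and using $\sum|n\widehat f(n)|<\infty$ to swap $\sum$ and $\iint$ gives
\[
  \Tr[(f(U)-f(V))Z]=\iint_{\cir\times\cir}\frac{f(\lambda)-f(\mu)}{\lambda-\mu}\,d\nu(\lambda,\mu),
\]
the divided difference being the continuous function $(\mathfrak D f)(\lambda,\mu)$ that equals $f'(\lambda)$ on the diagonal.

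Finally I would pass from this bimeasure integral to a one-variable integral against $f'$. Push $\nu$ forward: for a Borel set $\Delta\subseteq\cir$ set $\sigma(\Delta)=\nu(\Delta,\cir)+\nu(\cir,\Delta)$-type combinations; more precisely, since $U-V\in\mathcal S_2$ is ``small,'' the off-diagonal part of $\frac{f(\lambda)-f(\mu)}{\lambda-\mu}$ contributes, after symmetrization and using $\Tr[(f(U)-f(V))Z]=-\Tr[(f(V)-f(U))Z]$, only through the diagonal behaviour, and one obtains a complex measure $\tau$ on $\cir$ with $\Tr[(f(U)-f(V))Z]=\int_\cir f'\,d\tau$ for all $f\in\mathcal F_1$. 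The absolute continuity $d\tau=\eta(t)\,\frac{dt}{?}$ with $\eta\in L^1([0,2\pi])$ then follows exactly as in Krein's argument: apply the identity to the specific functions $f_m(e^{it})=e^{imt}$ (which lie in $\mathcal F_1$) to identify the Fourier coefficients of $\tau$ with $\frac{1}{im}\Tr[(U^m-V^m)Z]$, note these decay because $\|U^m-V^m\|_1$ does not grow too fast after the telescoping bound $\|U^m-V^m\|_2\le|m|\,\|U-V\|_2$ (so $|\widehat\tau(m)|\le\|U-V\|_2\|Z\|_2$, i.e. $\tau$ has bounded Fourier coefficients and in fact, upon a second telescoping, summable ones), and conclude $\tau$ is absolutely continuous with density $\eta\in L^1$. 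The main obstacle I anticipate is the last step --- rigorously justifying that the bimeasure integral collapses to a genuine one-variable integral against $f'$ with an honest $L^1$ density rather than merely a finite measure; this is where one must invoke the Birman--Solomyak representation of double operator integrals (or, alternatively, Peller's description of operator Lipschitz functions) carefully, and it is also the only place where the Hilbert--Schmidt hypothesis $U-V\in\mathcal S_2$, as opposed to trace class, is genuinely exploited via the factorization $\mathcal S_2\cdot\mathcal S_2\subseteq\mathcal S_1$.
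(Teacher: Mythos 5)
The paper itself gives no proof of this theorem: it is stated with the remark that ``the arguments are similar to \cite[Theorem 4.5]{Sk17}'', so the comparison is really with Skripka's argument. Your preparation is sound and matches that route: the set function $\nu(\Delta_1\times\Delta_2)=\Tr\big[E_U(\Delta_1)(U-V)E_V(\Delta_2)Z\big]$ does extend to a complex measure on $\cir\times\cir$ with total variation at most $\|U-V\|_2\|Z\|_2$ (Cauchy--Schwarz over partitions into rectangles, using that both factors are Hilbert--Schmidt); the telescoping identity plus cyclicity gives $\Tr[(U^n-V^n)Z]=\iint\frac{\lambda^n-\mu^n}{\lambda-\mu}\,d\nu$; and the interchange of $\sum_n$ with $\iint$ is licensed by $\big|\frac{\lambda^n-\mu^n}{\lambda-\mu}\big|\le|n|$ together with $\sum_n|n\widehat f(n)|<\infty$. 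Up to the identity $\Tr[(f(U)-f(V))Z]=\iint\frac{f(\lambda)-f(\mu)}{\lambda-\mu}\,d\nu(\lambda,\mu)$ your proposal is correct.

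The gap is exactly where you flag it, and the repair you sketch does not close it. Bounded Fourier coefficients of a measure on $\cir$ do not imply absolute continuity (every finite Borel measure has bounded Fourier coefficients), and the claimed summability ``upon a second telescoping'' is unsubstantiated and in fact too strong: the bound $\|U^m-V^m\|_2\le|m|\,\|U-V\|_2$ only gives $|\widehat\tau(m)|=O(1)$, and since $\eta$ is in general merely $L^1$ its Fourier coefficients need not be summable. The ``symmetrization / off-diagonal contributes only through the diagonal'' heuristic is likewise not an argument. The step that actually produces an $L^1$ density is elementary but different: for $\lambda=e^{is}$, $\mu=e^{it}$ with $|s-t|\le\pi$ write
\begin{align*}
\frac{f(e^{is})-f(e^{it})}{e^{is}-e^{it}}=\frac{1}{e^{is}-e^{it}}\int_t^s\frac{d}{du}f(e^{iu})\,du,
\end{align*}
substitute into the double integral and apply Fubini to get $\int_0^{2\pi}\frac{d}{du}f(e^{iu})\,\eta(u)\,du$ with
\begin{align*}
\eta(u)=\iint\frac{\operatorname{sgn}(s-t)\,\chi_{(t\wedge s,\,t\vee s)}(u)}{e^{is}-e^{it}}\,d\nu(s,t),
\qquad
\|\eta\|_{L^1}\le\iint\frac{|s-t|}{|e^{is}-e^{it}|}\,d|\nu|\le\frac{\pi}{2}\,|\nu|(\cir\times\cir),
\end{align*}
the inequality $|s-t|/|e^{is}-e^{it}|\le\pi/2$ taming the diagonal singularity of the kernel; the harmless factor $ie^{iu}$ relating $\frac{d}{du}f(e^{iu})$ to $f'(e^{iu})$ is absorbed into $\eta$. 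This Fubini argument is the essential content of \cite[Theorem 4.5]{Sk17} and is the only place where ``$\eta\in L^1$'' (rather than ``$\tau$ is a finite measure'') is actually obtained; without it your proof is incomplete.
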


We omit the proof because the arguments are similar to \cite[Theorem 4.5]{Sk17}. The main result in this section is the following theorem.
\begin{thm}\label{Thm7}
	Let $T_{0}$ and $T_{1}$ be two contractions and $U_{T_0}$, $U_{T_1}$ be the unitary dilations of $T_{0}$ and $T_{1}$, given by \eqref{newschafferdil} respectively. If $U_{T_1}-U_{T_0}\in\mathcal{S}_{2}(\hilh)$, then for each $f\in\lip(\cir)$,
	\begin{align}\label{newtrcl}
		\Big[\left(f(T_1)-f(T_0)\right)X\Big]\in \mathcal{S}_1(\mathcal{H}),
	\end{align}
	for every $X\in\mathcal{S}_2(\hilh)$.
	Moreover, for each $X\in\mathcal{S}_2(\hilh)$, there exists a function $\eta=\eta_{\,T_{0},\,T_{1},\,X}\in L^{1}([0,2\pi])$ (unique upto an additive constant) satisfying
	\begin{align}
		\label{Thm7eq1}&\Tr\Big[(f(T_{1})-f(T_{0}))X\Big]=\int_{0}^{2\pi}\frac{d}{dt}f(e^{it})\,\eta(t)\,dt,
	\end{align} for every $f\in C^1(\cir)$.
\end{thm}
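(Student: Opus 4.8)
The plan is to reduce the statement about contractions to the corresponding statement about their unitary dilations, namely Theorem \ref{Thm8}, and then remove the extra regularity hypothesis $f\in\mathcal{F}_1(\cir)$ by a density-plus-estimate argument built on Theorem \ref{Thm3}. First I would observe that, by the very definition of the functional calculus through the Sch\"affer dilation, $f(T_j)=P_\hilh f(U_{T_j})|_\hilh$, so for $X\in\mathcal{S}_2(\hilh)$ we may write
\begin{align*}
\Tr\big[(f(T_1)-f(T_0))X\big]=\Tr\big[(f(U_{T_1})-f(U_{T_0}))\,\widetilde X\big],
\end{align*}
where $\widetilde X=\iota_\hilh X P_\hilh$ is the Hilbert--Schmidt operator on the dilation space obtained by composing $X$ with the inclusion $\iota_\hilh$ and projection $P_\hilh$ (here I use that $P_\hilh$ commutes appropriately with compressing and that $\Tr$ is cyclic). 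Since $U_{T_1}-U_{T_0}\in\mathcal{S}_2$ by hypothesis, Theorem \ref{Thm8} applied to the pair $(U_{T_1},U_{T_0})$ and $Z=\widetilde X$ gives, for $f\in\mathcal{F}_1(\cir)$, the trace-class membership $(f(U_{T_1})-f(U_{T_0}))\widetilde X\in\mathcal{S}_1$ and a function $\eta\in L^1([0,2\pi])$ with
\begin{align*}
\Tr\big[(f(T_1)-f(T_0))X\big]=\int_0^{2\pi}f'(e^{it})\,\eta(t)\,dt.
\end{align*}
This establishes \eqref{newtrcl} and \eqref{Thm7eq1} on the dense subclass $\mathcal{F}_1(\cir)$.

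Next I would upgrade from $\mathcal{F}_1(\cir)$ to all of $\lip(\cir)$. The key point is that $\mathcal{F}_1(\cir)$ is dense in $\lip(\cir)$ in a sense strong enough to pass both the left- and right-hand sides of \eqref{Thm7eq1} to the limit: given $f\in C^1(\cir)\subset\lip(\cir)$, approximate it by the Ces\`aro (Fej\'er) means $\sigma_n f$, which lie in $\mathcal{F}_1(\cir)$, satisfy $\|\sigma_n f\|_{\lipp}\le\|f\|_{\lipp}$, converge uniformly to $f$, and whose derivatives $(\sigma_n f)'=\sigma_n(f')$ converge uniformly to $f'$. Now the contraction $T_0$ need not be strict, so I cannot invoke Theorem \ref{Thm3} directly; instead I would work on the dilation level, where $U_{T_0},U_{T_1}$ are genuine unitaries with $U_{T_1}-U_{T_0}\in\mathcal{S}_2$, and use the unitary Lipschitz estimate \eqref{d_p} with $p=2$: $\|f(U_{T_1})-f(U_{T_0})\|_2\le d_2\|f\|_{\lipp}\|U_{T_1}-U_{T_0}\|_2$. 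Applying this to $f-\sigma_n f$ shows $f(U_{T_1})-f(U_{T_0})$ is the $\mathcal{S}_2$-limit of $(\sigma_n f)(U_{T_1})-(\sigma_n f)(U_{T_0})$; multiplying by the fixed $\widetilde X\in\mathcal{S}_2$ and using that $\mathcal{S}_2\cdot\mathcal{S}_2\subset\mathcal{S}_1$ with $\|AB\|_1\le\|A\|_2\|B\|_2$, we get $\mathcal{S}_1$-convergence, hence \eqref{newtrcl} for all $f\in\lip(\cir)$ and convergence of the traces $\Tr[(\sigma_n f(T_1)-\sigma_n f(T_0))X]\to\Tr[(f(T_1)-f(T_0))X]$. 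On the right-hand side, the functions $\eta_n:=\eta_{T_0,T_1,X}$ attached to $\sigma_n f$ are not obviously convergent, so here is where care is needed: one shows that the linear functional $f'\mapsto \Tr[(f(T_1)-f(T_0))X]$ is bounded on $C(\cir)$ (by the $\mathcal{S}_2$-estimate above it is dominated by $d_2\|f\|_{\lipp}\|U_{T_1}-U_{T_0}\|_2\|X\|_2$, and $\|f\|_{\lipp}$ is controlled by $\|f'\|_\infty$), hence by the Riesz representation theorem it is integration against a finite complex measure $\mu$ on $\cir$; a standard argument (e.g. comparing with the $\mathcal{F}_1$ case where the functional is already known to be $\int f'\eta$, and using that $f'$ ranges over a dense subspace) identifies $\mu$ as absolutely continuous with density $\eta\in L^1([0,2\pi])$, and uniqueness up to an additive constant follows because only $f'$ (not $f$) enters, and the constants are precisely the kernel of $f\mapsto f'$ on $C^1(\cir)$.

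I expect the main obstacle to be this last identification step: showing that the limiting functional $f\mapsto\Tr[(f(T_1)-f(T_0))X]$ on $C^1(\cir)$ is represented by integration of $f'$ against an $L^1$ density rather than merely against a singular measure. The clean way to handle it is to note that every $g\in C(\cir)$ with $\int_\cir g\,\frac{dz}{2\pi i z}=\widehat g(0)=0$ is of the form $g=f'$ for some $f\in C^1(\cir)$ (integrate the Fourier series term by term), so the functional descends to a bounded functional $\Lambda$ on the codimension-one subspace $\{g\in C(\cir):\widehat g(0)=0\}$; by Hahn--Banach and Riesz it is $\int g\,d\mu$ for a finite measure $\mu$, and the task is to show $\mu\ll dt$. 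For this one leverages the already-proven formula on $\mathcal{F}_1(\cir)$: for $f\in\mathcal{F}_1(\cir)$ we have two representations of $\Lambda(f')$, namely $\int f'\,d\mu$ and $\int_0^{2\pi} f'(e^{it})\eta(t)\,dt$; since $\{f':f\in\mathcal{F}_1(\cir)\}$ contains all trigonometric polynomials with vanishing zeroth Fourier coefficient, which is $\|\cdot\|_\infty$-dense in $\{g\in C(\cir):\widehat g(0)=0\}$, the measures $\mu$ and $\eta\,dt$ agree on a dense set and hence coincide, giving $\mu=\eta\,dt$ with $\eta\in L^1$. (Care is also needed to make the finiteness of $\eta$ and the convergence $\Tr[\sigma_n f\cdots]\to\Tr[f\cdots]$ rigorous, but these are routine given the $\mathcal{S}_2$-bound; the approximation by Fej\'er means is chosen precisely so that $\|\sigma_n f\|_{\lipp}$ stays bounded.) Everything else—the reduction to the dilation, the cyclicity of the trace, and the use of Theorem \ref{Thm8}—is bookkeeping.
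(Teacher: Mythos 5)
Your proposal is correct and follows essentially the same route as the paper: establish \eqref{newtrcl} via the Schatten--H\"older bound $\|(f(T_1)-f(T_0))X\|_1\le d_2\|f\|_{\lipp}\|U_{T_1}-U_{T_0}\|_2\|X\|_2$, use that bound to represent $f\mapsto\Tr[(f(T_1)-f(T_0))X]$ by a measure against $f'$ via Riesz, invoke Theorem \ref{Thm8} on the dilations together with cyclicity of the trace to get the $L^1$-density formula on $\mathcal{F}_1(\cir)$, and then identify the measure with $(\eta+c)\,dt$ by matching Fourier coefficients. The only stylistic difference is your Fej\'er-means detour, which is not needed: the trace-class membership and the trace functional's boundedness already follow directly from the $\mathcal{S}_2$ estimate, exactly as in the paper.
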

\begin{proof}
	By H\"older's inequality and \eqref{d_p},
	\begin{align}
		\nonumber\left\|\Big[(f(T_{1})-f(T_{0}))X\Big]\right\|_1&\le\|f(T_{1})-f(T_{0})\|_{2}~\|X\|_{2}\\
		\nonumber&\le\|f(U_{T_{1}})-f(U_{T_{0}})\|_{2}~\|X\|_{2}\\
		\label{newThm7eq2}&\le d_2 \|f\|_{\lip(\cir)}\,\|U_{T_{1}} - U_{T_{0}}\|_{2}\, \|X\|_{2}.
	\end{align}
	Therefore \eqref{newtrcl} holds true. Now for $f\in C^1(\cir)$, from \eqref{newThm7eq2}, we have
	\begin{align*}
		&\left|\Tr\Big[(f(T_{1})-f(T_{0}))X\Big]\right|\leq d_2\|f'\|_{\infty}\,\|U_{T_1}-U_{T_0}\|_2\,\|X\|_{2}.
	\end{align*}
	Hence there exists a complex Borel measure $\mu$ on $\cir$ such that 
	\begin{align}
		\label{Thm7eq3}&\Tr\Big[(f(T_{1})-f(T_{0}))X\Big]=\int_{\cir}f'(z)\,d\mu(z),
	\end{align}
	for all $f\in C^{1}(\cir)$. Now by change of variable in the integration of \eqref{Thm7eq3} we arrive at
	\begin{align}
		\label{Thm7eq5}&\int_{\cir}f'(z)\,d\mu(z)=\int_{0}^{2\pi}\frac{d}{dt}f(e^{it})\,d\nu(t),
	\end{align}
	where $\nu$ is the push forward measure for $\mu$ on $[0, 2\pi]$. On the other hand, for $f\in\mathcal{F}_{1}(\cir)\subset C^{1}(\cir)$, Theorem \ref{Thm8}  and the Sch\"affer matrix representations of $U_{T_1}$ and $U_{T_0}$ and the cyclicity property of trace together give
	\begin{align}
		\nonumber\Tr\Big[(f(T_{1})-f(T_{0}))X\Big]&=\nonumber\Tr\Big[{\hat{P}}_\hilh\,\left(f(U_{T_{1}})-f(U_{T_{0}})\right)\,{\hat{P}}_\hilh\,\left({\hat{P}}_\hilh\, X\,{\hat{P}}_\hilh\right)\Big]\\
		\nonumber&=\Tr\Big[\left(f(U_{T_{1}})-f(U_{T_{0}})\right)\left({\hat{P}}_\hilh\, X\,{\hat{P}}_\hilh\right)\Big]\\
		\label{Thm7eq4}&=\int_{0}^{2\pi}\frac{d}{dt}f(e^{it})\,\eta(t)\,dt,
	\end{align}
	where ${\hat{P}}_\hilh$ is the orthogonal projection from $\ell_2(\hilh)\oplus\hilh\oplus\ell_2(\hilh)$ onto $0\oplus\hilh\oplus0$, and $\eta=\eta_{\,U_{T_{0}},\,U_{T_{1}},\,{\hat{P}}_\hilh\, X\,{\hat{P}}_\hilh}\in L^{1}[0,2\pi]$. Therefore, from \eqref{Thm7eq3}, \eqref{Thm7eq5} and \eqref{Thm7eq4} we arrive at
	\begin{align}
		\nonumber&\int_{0}^{2\pi}\frac{d}{dt}f(e^{it})\,\eta(t)dt=\int_{0}^{2\pi}\frac{d}{dt}f(e^{it})\,d\nu(t),\hspace*{1cm}\text{for all }~f\in\mathcal{F}_{1}(\cir),
	\end{align}
	which implies,
	\begin{align*}
		&\int_{0}^{2\pi}e^{int}\,\eta(t)\,dt=\int_{0}^{2\pi}e^{int}\,d\nu(t),\hspace*{1cm}\text{for all }~n\in\Z\backslash\{0\},
	\end{align*}
	which further implies that $d\nu(t)=(\eta(t)+c)dt$, for some constant $c$. This establishes \eqref{Thm7eq1}. Let $\eta_{1}$ and $\eta_{2}$ be two $L^{1}[0, 2\pi]$ functions satisfying \eqref{Thm7eq1}, then 
	\begin{align*}
		&\int_{0}^{2\pi}e^{int}\,\eta_{1}(t)\,dt=\int_{0}^{2\pi}e^{int}\,\eta_{2}(t)\,dt,\hspace*{1cm}\text{for all }~n\in\Z\backslash\{0\}.
	\end{align*}
	This shows that $\eta_{1}$ and $\eta_{2}$ are unique upto an additive constant. This completes the proof of the theorem.
\end{proof}
\begin{crlre}
	Thanks to Theorems \ref{Thm2} and \ref{Thm3}, the above Theorem \ref{Thm7} holds for all pairs of contractions, with one of them being strict, when their difference is a Hilbert-Schmidt operator.
\end{crlre}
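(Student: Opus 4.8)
The plan is to deduce the corollary directly from Theorem~\ref{Thm7}, whose only nontrivial hypothesis is that the difference $U_{T_1}-U_{T_0}$ of the Sch\"affer dilations belongs to $\mathcal{S}_2$ (of the dilation space $\ell_2(\hilh)\oplus\hilh\oplus\ell_2(\hilh)$). Once this is in place, the two conclusions of Theorem~\ref{Thm7}---that $(f(T_1)-f(T_0))X\in\mathcal{S}_1(\hilh)$ for every $f\in\lip(\cir)$ and every $X\in\mathcal{S}_2(\hilh)$, and the existence of a spectral-shift-type function $\eta\in L^1([0,2\pi])$, unique up to an additive constant, satisfying the trace formula \eqref{Thm7eq1} for all $f\in C^1(\cir)$---are immediate. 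So the entire task reduces to verifying that hypothesis from the present ones.

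First I would reduce to the case where $T_0$ is the strict contraction: if instead $T_1$ is the strict one, I interchange the two labels, which preserves the standing assumption $T_1-T_0\in\mathcal{S}_2(\hilh)$, replaces $U_{T_1}-U_{T_0}$ by its negative (so membership in $\mathcal{S}_2$ is unaffected), and in the end merely replaces the function $\eta$ produced by Theorem~\ref{Thm7} by $-\eta$. Next, with $\|T_0\|<1$ and $T_1-T_0\in\mathcal{S}_2(\hilh)$, I invoke Theorem~\ref{Thm2} with $p=2$ to obtain
\[
	\|U_{T_1}-U_{T_0}\|_2\le 2^{1/2}\left[\frac{2+(1-\|T_0\|^2)^{1/2}}{(1-\|T_0\|^2)^{1/2}}\right]\|T_1-T_0\|_2<\infty,
\]
which is precisely $U_{T_1}-U_{T_0}\in\mathcal{S}_2$. (Alternatively, Theorem~\ref{Thm3} supplies at once the estimate $\|f(T_1)-f(T_0)\|_2\le C_{T_0}\,d_2\,\|f\|_{\lipp}\|T_1-T_0\|_2$, which is exactly what powers the H\"older step \eqref{newThm7eq2} in the proof of Theorem~\ref{Thm7}.) I would then apply Theorem~\ref{Thm7} to the pair $(T_0,T_1)$ and undo the relabeling if one was made.

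I do not expect a genuine obstacle here: the substance of the corollary is already contained in Theorem~\ref{Thm2}, which is exactly the device that upgrades the Hilbert--Schmidt hypothesis on $T_1-T_0$ to the Hilbert--Schmidt hypothesis on $U_{T_1}-U_{T_0}$ demanded by Theorem~\ref{Thm7}. The only points requiring a little care are the bookkeeping of which of the two contractions is taken to play the role of the strict one in Theorem~\ref{Thm2} (and the attendant sign change in $\eta$), and, if the quantitative form is wanted, tracking the constant $2^{1/2}\big[2+(1-\|T_0\|^2)^{1/2}\big]/(1-\|T_0\|^2)^{1/2}$ through the estimate \eqref{newThm7eq2}.
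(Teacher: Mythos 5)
Your proposal is correct and is essentially the paper's intended argument: the corollary is proved simply by noting that Theorem \ref{Thm2} with $p=2$ converts the hypothesis $T_1-T_0\in\mathcal{S}_2(\hilh)$ (with one contraction strict) into the hypothesis $U_{T_1}-U_{T_0}\in\mathcal{S}_2$ required by Theorem \ref{Thm7}. Your additional remarks on relabeling when $T_1$ rather than $T_0$ is the strict contraction, and on tracking the constant, are correct but not needed beyond this observation.
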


\section{The Hilbert-Schmidt case}\label{DOI}

The main theorem of this section is the following.
\begin{thm}\label{Thm6}
	Let $U$ and $V$ be two unitary operators on $\hilh$ with spectral measures $E$ and $F$ respectively. Suppose $U-V\in{\mathcal S}_{2}(\hilh)$. Then
	\begin{align}\label{doiuniformula}
		&f(U)-f(V)=\int_{0}^{2\pi}\int_{0}^{2\pi}\frac{f(e^{i\lambda})-f(e^{i\mu})}{e^{i\lambda}-e^{i\mu}}\,dE(\lambda)\,(U-V)\,dF(\mu),
	\end{align}
	for any Lipschitz function $f:\cir\to\C$. Furthermore,
	\begin{align}
		\label{eq27}\|f(U)-f(V)\|_{2}\le\|f\|_{\lip(\cir)}\|U-V\|_{2}.
	\end{align}
\end{thm}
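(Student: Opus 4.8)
The plan is to prove \eqref{doiuniformula} first for a dense class of functions and then pass to the limit, using the Hilbert--Schmidt topology where the double operator integral is well behaved. Concretely, I would first establish the formula for trigonometric polynomials by a direct telescoping computation: for $f(z)=z^n$ with $n\ge 1$ one has the algebraic identity $U^n-V^n=\sum_{k=0}^{n-1}U^k(U-V)V^{n-1-k}$, and since $U^k=\int e^{ik\lambda}\,dE(\lambda)$ and $V^{n-1-k}=\int e^{i(n-1-k)\mu}\,dF(\mu)$, summing the geometric series $\sum_{k=0}^{n-1}e^{ik\lambda}e^{i(n-1-k)\mu}$ yields exactly the divided difference $\frac{e^{in\lambda}-e^{in\mu}}{e^{i\lambda}-e^{i\mu}}$. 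The same works for negative powers using $U^{-1}=U^*$, $V^{-1}=V^*$. By linearity, \eqref{doiuniformula} holds for every trigonometric polynomial $f$, and for such $f$ the double operator integral is simply a finite sum of terms $\widehat{f}(n)\,U^n\,\cdot\,$ reorganised, so there is no subtlety about its definition.

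Next I would set up the double operator integral $\mathcal{T}_\varphi(X)=\int\!\!\int \varphi(\lambda,\mu)\,dE(\lambda)\,X\,dF(\mu)$ for a bounded Borel symbol $\varphi$ acting on $X\in\mathcal{S}_2(\hilh)$; on the Hilbert--Schmidt class this is unambiguously defined (via the spectral theorem identifying $\mathcal{S}_2$ with an $L^2$ space of the product measure $E\otimes F$, where $\mathcal{T}_\varphi$ becomes multiplication by $\varphi$) and satisfies the clean bound $\|\mathcal{T}_\varphi(X)\|_2\le \|\varphi\|_\infty\,\|X\|_2$. Apply this with $X=U-V\in\mathcal{S}_2(\hilh)$ and with the divided-difference symbol $\varphi_f(\lambda,\mu)=\frac{f(e^{i\lambda})-f(e^{i\mu})}{e^{i\lambda}-e^{i\mu}}$ (extended along the diagonal by $\varphi_f(\lambda,\lambda)=\lim$, which exists a.e.\ and is bounded because $f$ is Lipschitz, with $\|\varphi_f\|_\infty\le\|f\|_{\lip(\cir)}$, using $|e^{i\lambda}-e^{i\mu}|\le|\lambda-\mu|$ on a period). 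This immediately gives \eqref{eq27} \emph{once} \eqref{doiuniformula} is known, since $\|f(U)-f(V)\|_2=\|\mathcal{T}_{\varphi_f}(U-V)\|_2\le\|\varphi_f\|_\infty\|U-V\|_2\le\|f\|_{\lip(\cir)}\|U-V\|_2$.

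It remains to upgrade \eqref{doiuniformula} from trigonometric polynomials to arbitrary Lipschitz $f$. Given $f\in\lip(\cir)$, approximate it by its Fej\'er (Ces\`aro) means $f_N$, which are trigonometric polynomials converging to $f$ uniformly on $\cir$ with $\|f_N\|_{\lip(\cir)}\le\|f\|_{\lip(\cir)}$; the corresponding symbols $\varphi_{f_N}$ are uniformly bounded by $\|f\|_{\lip(\cir)}$ and converge pointwise (off the diagonal, hence $E\otimes F$-a.e.) to $\varphi_f$. On the left side, $f_N(U)\to f(U)$ and $f_N(V)\to f(V)$ in operator norm (hence the difference converges in $\mathcal{S}_2$ modulo the fact that the differences already lie in a fixed bounded region --- more carefully, $\|f_N(U)-f(U)\|$ and $\|f_N(V)-f(V)\|$ tend to $0$, and combined with the already-established $\mathcal{S}_2$-membership this passes to the Hilbert--Schmidt limit; alternatively invoke \eqref{d_p} with $p=2$ applied to $f-f_N$). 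On the right side, $\mathcal{T}_{\varphi_{f_N}}(U-V)\to\mathcal{T}_{\varphi_f}(U-V)$ in $\mathcal{S}_2$ by the dominated convergence theorem in the $L^2(E\otimes F)$ picture. Equating the limits gives \eqref{doiuniformula} for all Lipschitz $f$.

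The main obstacle is the justification of the limit passage in the Hilbert--Schmidt norm on the left-hand side: uniform convergence $f_N\to f$ gives operator-norm convergence of $f_N(U)\to f(U)$ and $f_N(V)\to f(V)$, but one needs convergence of the \emph{difference} $f_N(U)-f_N(V)\to f(U)-f(V)$ in $\mathcal{S}_2$, not merely in operator norm. The clean way around this is to apply the already-proven \eqref{d_p} (the Aleksandrov--Peller--type estimate, valid for $p=2$) to the Lipschitz function $f-f_N$: then $\|(f(U)-f(V))-(f_N(U)-f_N(V))\|_2\le d_2\,\|f-f_N\|_{\lipp}\,\|U-V\|_2$. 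Since $f_N$ is the Fej\'er mean of $f$, one has $\|f-f_N\|_{\lipp}\to 0$ is \emph{not} automatic, so instead one should argue via the symbol side only for the right-hand side and use a weak-type / pointwise argument: equivalently, pair both sides against an arbitrary trace-class operator and use that equality of two $\mathcal{S}_2$ sequences converging respectively in $\mathcal{S}_2$-weak sense forces equality of limits. I expect handling this convergence cleanly --- deciding whether to route it through \eqref{d_p} or through the $L^2(E\otimes F)$ functional model and a density argument --- to be the one point requiring genuine care; everything else is the telescoping identity and dominated convergence.
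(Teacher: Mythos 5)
Your argument is correct in substance but follows a genuinely different route from the paper's. The paper never approximates $f$: it proves the exact symbol identity $f(U)X-Xf(V)=T^{G}_{h^{[1]}}(UX-XV)$ for \emph{every} $X\in\mathcal{S}_2(\hilh)$ and every Lipschitz $f$ (Lemma \ref{Thm4}), using only the multiplicativity $T_{\Phi\Psi}=T_{\Phi}T_{\Psi}$ of the double operator integral and the everywhere-valid factorization $f\circ p_1-f\circ p_2=(p_1-p_2)h^{[1]}$; it then approximates in the \emph{operator} variable, choosing Voiculescu-type finite-rank projections $P_n\uparrow I$ (Theorem \ref{Thm5}) so that $UP_n-P_nV\to U-V$ in $\mathcal{S}_2$, and identifies the limit of $f(U)P_n-P_nf(V)$. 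You instead approximate in the \emph{function} variable: telescoping for trigonometric polynomials, then Fej\'er means. Your worry about the left-hand limit is resolved exactly by the second option you name, and more easily than you fear: $f_N(U)-f_N(V)\to f(U)-f(V)$ in operator norm, while $\mathcal{T}_{\varphi_{f_N}}(U-V)\to\mathcal{T}_{\varphi_f}(U-V)$ in $\mathcal{S}_2$ and hence also in operator norm, so the two termwise-equal sequences have the same limit; no trace-class pairing is needed. (You are right to discard the route through $\|f-f_N\|_{\lipp}\to0$, which fails for general Lipschitz $f$.) The one point you leave under-justified is the role of the diagonal: the telescoped symbol $\sum_{k=0}^{n-1}e^{ik\lambda}e^{i(n-1-k)\mu}$ does not vanish on $\{\lambda=\mu\}$ while the divided difference is undefined there, and your dominated-convergence step needs $\varphi_{f_N}\to\varphi_f$ almost everywhere with respect to the scalar measure $\omega\mapsto\|G(\omega)(U-V)\|_2^2$, not with respect to ``$E\otimes F$''. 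Both issues are settled by the standard lemma that this measure assigns the diagonal mass zero, equivalently that $\sum_k E(\delta_k)(U-V)F(\delta_k)\to0$ as the arcs $\delta_k$ shrink, since $E(\delta_k)U\approx e^{i\lambda_k}E(\delta_k)$ and $VF(\delta_k)\approx e^{i\lambda_k}F(\delta_k)$; you should state and prove this. The paper's symbol-algebra route avoids the diagonal entirely because $h^{[1]}$ is set to $0$ there and the factorization still holds, whereas your route is more elementary and self-contained in that it does not invoke Theorem \ref{Thm5}, whose proof rests on \cite{AcSi14}.
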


This is a generalization of a result of Birman and Solomyak \cite{BiSol66}. They considered $f^\prime$ in the class $\lip_\alpha(\mathbb T)$ defined below and used a different method of proof.
\begin{dfn}
	For $\alpha>0$, a function $f:\mathbb T\,\to\C$ is said to be $\alpha$-Lipschitz if there is a constant $C>0$ such that
	\begin{align*}
		|f(z)-f(w)|\le C|z-w|^{\alpha},
	\end{align*}
	for arbitrary $z, w\in\mathbb T$ (we use the notation $\lip_\alpha(\mathbb T)$ for the class of $\alpha$-Lipschitz functions on $\mathbb T$, and $\lip(\mathbb T)$ for $\lip_1(\mathbb T)$). The Lipschitz norm $\|f\|_{\lip(\mathbb T)}$ of $f$ is the smallest positive value of $C$.
\end{dfn}

A brief discussion of the double operator integral following Birman and Solomyak in \cite{BiSol66} (see also \cite{BiSol67}, \cite{BiSol73}) is necessary. Given two spectral measures $E(\cdot)$ and $F(\cdot)$ on the Borel $\sigma$-algebra $\mathcal{B}([0,2\pi])$, consider the spectral measure defined on the product sets of the $\sigma$-algebra $\mathcal{B}([0,2\pi])\times\mathcal{B}([0,2\pi]) $ by  
\begin{align}\label{eq25}
	G(\delta\times\sigma)(X):=E(\delta)XF(\sigma),\hspace*{0.7cm}\forall~ X\in\mathcal{S}_{2}(\hilh)
\end{align}
and extended on $\mathcal{B}([0,2\pi]\times[0,2\pi]) $ using standard techniques. 
\begin{dfn}
	Let $\Phi\in L^{\infty}([0, 2\pi]\times[0, 2\pi], G)$, where $G$ is defined as in \eqref{eq25}. The Birman-Solomyak double operator integral \,$T_{\Phi}^{G}:\mathcal{S}_{2}(\hilh)\to\mathcal{S}_{2}(\hilh)$ is defined as the integral of the symbol $\Phi$ with respect to the spectral measure $G(\cdot)$, that is,
	\begin{align}
		\nonumber T_{\Phi}^{G}(X):&=\int_{0}^{2\pi}\int_{0}^{2\pi}\Phi(\lambda_{1},\lambda_{2})\,dG(\lambda_{1},\lambda_{2})(X),\hspace*{0.5cm}X\in\mathcal{S}_{2}(\hilh).
	\end{align}
\end{dfn}
The notation 
\begin{align}
	\label{eq17}&T_{\Phi}^{G}(X):=\int_{0}^{2\pi}\int_{0}^{2\pi}\Phi(\lambda_{1},\lambda_{2})\,dE(\lambda_{1})\,X\,dF(\lambda_{2}),\hspace*{0.5cm}X\in\mathcal{S}_{2}(\hilh),
\end{align}
will be used frequently.  We note that $T_{\Phi}^{G}$ is bounded on $\mathcal{S}_{2}(\hilh)$ and $\|T_{\Phi}^{G}\|=\|\Phi\|_{\infty}$.

The following properties of $T_{\Phi}^{G}$ on $\mathcal{S}_{2}(\hilh)$ are useful:
\begin{enumerate}[(a)]
	\item\label{Prop1} $T_{\alpha\Phi+\beta\Psi}=\alpha T_{\Phi}+\beta T_{\Psi},\hspace*{0.3cm}\alpha, \beta\in\C$;
	\item\label{Prop2}$T_{\Phi\Psi}=T_{\Phi}T_{\Psi};$
	\item\label{Prop3}If $\Phi(\lambda_{1},\lambda_{2})=\phi_{1}(\lambda_{1})\cdot \phi_{2}(\lambda_{2})$ with  $\phi_1, \phi_2\in L^{\infty}([0, 2\pi])$, then 
	\begin{align*}
		T_{\Phi}^{G}(X):&=\left(\int_{0}^{2\pi}\phi_{1}(\lambda_{1})dE(\lambda_{1})\right)\cdot X\cdot\left(\int_{0}^{2\pi}\phi_{2}(\lambda_{2})\,dF(\lambda_{2})\right)
	\end{align*}
	holds for all $X\in\mathcal{S}_{2}(\hilh)$.
\end{enumerate}

\begin{rmrk}
	The Birman-Solomyak result can be stated using the function 
	\begin{align}
		\label{eq21}h^{[1]}(\lambda, \mu)=\begin{cases}\frac{f(e^{i\lambda})-f(e^{i\mu})}{e^{i\lambda}-e^{i\mu}}&\text{if $\lambda\neq\mu$},\\
			0& \text{if $\lambda=\mu$}.\\
		\end{cases}
	\end{align} 
	Taking $E$ and $F$ in equation \eqref{eq17} to be the spectral measures of $U$ and $V$ respectively, we know from \cite[Theorem 11]{BiSol66} that $T_{h^{[1]}}^G(U-V)$ is a Hilbert-Schmidt operator and, in fact, equals $f(U)-f(V)$ for $f'\in \lip_{\alpha}(\cir)$ for $\alpha>0$. 
\end{rmrk}
\noindent We give an elementary proof of Theorem \ref{Thm6} using just the Lipschitz property of $f$.
\begin{lma}\label{Thm4}
	Let $f:\cir\to\C$ be a Lipschitz function on $\cir$. Then for any pair of unitaries $U$, $V$ and for $X\in\mathcal{S}_{2}(\hilh)$, we have 
	\[f(U)X-Xf(V)=T_{h^{[1]}}^{G}\left(UX-XV\right), \text{ and }\]
	\begin{equation*}
		\|f(U)X-Xf(V)\|_{2}\le\|f\|_{\lip(\cir)}\|UX-XV\|_{2}.
	\end{equation*}
\end{lma}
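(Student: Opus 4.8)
The plan is to prove the displayed identity first for trigonometric polynomials $f$, deduce the asserted norm bound in that case, and then reach a general Lipschitz $f$ by a limiting argument; throughout, $E$ and $F$ denote the spectral measures of $U$ and $V$, $G$ is the associated product measure from \eqref{eq25}, and $h^{[1]}_{g}$ denotes the divided‑difference symbol \eqref{eq21} built from a function $g$. For a monomial $f(z)=z^{n}$ with $n\geq 1$ I would start from the telescoping identity $U^{n}X-XV^{n}=\sum_{k=0}^{n-1}U^{k}(UX-XV)V^{n-1-k}$ and rewrite each summand, by the factorization property (c) of $T^{G}$, as $U^{k}(UX-XV)V^{n-1-k}=T^{G}_{\Phi_{k}}(UX-XV)$ with $\Phi_{k}(\lambda,\mu)=e^{ik\lambda}e^{i(n-1-k)\mu}$; summing and using linearity (a),
\[
U^{n}X-XV^{n}=T^{G}_{\Phi}(UX-XV),\qquad \Phi(\lambda,\mu)=\sum_{k=0}^{n-1}e^{ik\lambda}e^{i(n-1-k)\mu}.
\]

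On $\{e^{i\lambda}\neq e^{i\mu}\}$ the geometric sum collapses to $\big(e^{in\lambda}-e^{in\mu}\big)/\big(e^{i\lambda}-e^{i\mu}\big)=h^{[1]}_{z^{n}}(\lambda,\mu)$, so $\Phi$ and $h^{[1]}_{z^{n}}$ agree off the set $\{e^{i\lambda}=e^{i\mu}\}$. The crucial point is that this exceptional set becomes invisible once $T^{G}$ is applied to $UX-XV$: since $UX-XV=T^{G}_{\psi}(X)$ with $\psi(\lambda,\mu)=e^{i\lambda}-e^{i\mu}$ (again by (c) and (a)), the multiplicativity property (b) gives $T^{G}_{\Phi-h^{[1]}_{z^{n}}}(UX-XV)=T^{G}_{(\Phi-h^{[1]}_{z^{n}})\psi}(X)=0$, as $\psi$ vanishes on $\{e^{i\lambda}=e^{i\mu}\}$. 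Hence $U^{n}X-XV^{n}=T^{G}_{h^{[1]}_{z^{n}}}(UX-XV)$ for $n\geq 1$; the case $n=0$ is trivial, and for $n<0$ one reduces to the positive case through $U^{n}X-XV^{n}=-U^{n}(U^{|n|}X-XV^{|n|})V^{n}$ and (b). Since both sides of the desired identity are linear in $f$ and $h^{[1]}_{f}$ is linear in $f$, this establishes $f(U)X-Xf(V)=T^{G}_{h^{[1]}_{f}}(UX-XV)$ for every trigonometric polynomial $f$, and then $\|f(U)X-Xf(V)\|_{2}\leq\|T^{G}_{h^{[1]}_{f}}\|\,\|UX-XV\|_{2}=\|h^{[1]}_{f}\|_{\infty}\|UX-XV\|_{2}\leq\|f\|_{\lip(\cir)}\|UX-XV\|_{2}$, using $\|T^{G}_{\Phi}\|=\|\Phi\|_{\infty}$ and the pointwise estimate $|h^{[1]}_{f}(\lambda,\mu)|=\big|\tfrac{f(e^{i\lambda})-f(e^{i\mu})}{e^{i\lambda}-e^{i\mu}}\big|\leq\|f\|_{\lip(\cir)}$.

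To pass to an arbitrary Lipschitz $f$ I would approximate it by its Fej\'er means $f_{m}$: these are trigonometric polynomials, $f_{m}\to f$ uniformly on $\cir$, and $\|f_{m}\|_{\lip(\cir)}\leq\|f\|_{\lip(\cir)}$ since $f_{m}$ is the convolution of $f$ with the Fej\'er kernel, a probability density, which cannot increase the Lipschitz constant. On the left‑hand side, the continuous functional calculus gives $\|f_{m}(U)-f(U)\|\to 0$ and $\|f_{m}(V)-f(V)\|\to 0$, whence $f_{m}(U)X-Xf_{m}(V)\to f(U)X-Xf(V)$ in $\mathcal{S}_{2}(\hilh)$. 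On the right‑hand side, $h^{[1]}_{f_{m}}\to h^{[1]}_{f}$ pointwise on $[0,2\pi]^{2}$ (on $\{e^{i\lambda}=e^{i\mu}\}$ both vanish) and $\|h^{[1]}_{f_{m}}\|_{\infty}\leq\|f\|_{\lip(\cir)}$ uniformly in $m$.

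The hard part will be upgrading this bounded pointwise convergence of symbols to convergence $T^{G}_{h^{[1]}_{f_{m}}}(UX-XV)\to T^{G}_{h^{[1]}_{f}}(UX-XV)$ in $\mathcal{S}_{2}(\hilh)$, since the uniform bound $\|T^{G}_{\Phi}\|=\|\Phi\|_{\infty}$ by itself does not yield it. My plan for this step is to fix $Y=UX-XV\in\mathcal{S}_{2}(\hilh)$, introduce the finite positive scalar measure $m_{Y}(\delta\times\sigma)=\|E(\delta)\,Y\,F(\sigma)\|_{2}^{2}$ (of total mass $\|Y\|_{2}^{2}$), and prove the isometry identity $\|T^{G}_{\Phi}(Y)\|_{2}^{2}=\int|\Phi|^{2}\,dm_{Y}$ --- first for simple $\Phi$, using the pairwise $\mathcal{S}_{2}$‑orthogonality of the blocks $E(\delta_{j})\,Y\,F(\sigma_{j})$ indexed by disjoint rectangles, and then for all $\Phi\in L^{\infty}(G)$ by uniform approximation by simple functions. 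Granting this, dominated convergence gives
\[
\big\|T^{G}_{h^{[1]}_{f_{m}}}(Y)-T^{G}_{h^{[1]}_{f}}(Y)\big\|_{2}^{2}=\int\big|h^{[1]}_{f_{m}}-h^{[1]}_{f}\big|^{2}\,dm_{Y}\longrightarrow 0,
\]
and letting $m\to\infty$ in the polynomial identity and in the polynomial norm bound delivers both assertions. As a sanity check one may note the case $f(z)=z$, where $h^{[1]}_{z}\equiv 1$ off the diagonal, so that $T^{G}_{h^{[1]}_{z}}$ is the identity and both sides equal $UX-XV$.
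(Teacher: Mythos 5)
Your argument is correct, but it takes a substantially longer route than the paper's. The paper proves the identity in one stroke for the Lipschitz function $f$ itself: writing $p_1(\lambda,\mu)=e^{i\lambda}$, $p_2(\lambda,\mu)=e^{i\mu}$, it uses property (c) to get $T^G_{f\circ p_1}(X)=f(U)X$ and $T^G_{f\circ p_2}(X)=Xf(V)$, and then the pointwise factorization $f\circ p_1-f\circ p_2=(p_1-p_2)\,h^{[1]}$ (valid everywhere, since both sides vanish on the diagonal) together with properties (a) and (b) yields $f(U)X-Xf(V)=T^G_{h^{[1]}}\bigl(T^G_{p_1}(X)-T^G_{p_2}(X)\bigr)=T^G_{h^{[1]}}(UX-XV)$ directly; the norm bound follows from $\|T^G_{h^{[1]}}\|=\|h^{[1]}\|_\infty\le\|f\|_{\lip(\cir)}$. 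You instead establish the identity for monomials via telescoping, extend by linearity to trigonometric polynomials, and then pass to general Lipschitz $f$ by Fej\'er approximation, which forces you to justify convergence of $T^G_{h^{[1]}_{f_m}}(UX-XV)$ via the isometry $\|T^G_\Phi(Y)\|_2^2=\int|\Phi|^2\,dm_Y$ and dominated convergence. Each step of yours checks out --- in particular your handling of the diagonal discrepancy between the telescoped symbol and $h^{[1]}_{z^n}$ by multiplying against $\psi=p_1-p_2$ is exactly right, the Fej\'er means do satisfy $\|f_m\|_{\lip(\cir)}\le\|f\|_{\lip(\cir)}$, and the isometry is the standard $L^2$ model of integration against the spectral measure $G$ on $\mathcal S_2(\hilh)$. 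What the paper's route buys is economy: since properties (a)--(c) already apply to arbitrary bounded Borel symbols, there is no need to pass through polynomials at all, and the entire approximation and dominated-convergence apparatus can be dispensed with. What your route buys is an explicit demonstration of where the divided difference comes from (the telescoping sum) and a reusable $L^2(m_Y)$ convergence lemma for double operator integrals; but for this lemma it is strictly more work for the same conclusion.
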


\begin{proof}
	By the spectral theorem we have
	\begin{align*}
		U=\int_{0}^{2\pi}e^{i\lambda}dE(\lambda)\hspace*{0.5cm}\text{and}\hspace*{0.5cm}V=\int_{0}^{2\pi}e^{i\mu}dF(\mu),
	\end{align*}
	where $E(\cdot)$ and $F(\cdot)$ are the spectral measures associated with the unitaries $U$ and $V$ respectively.
	Let $p_{1}(\lambda,\mu)=e^{i\lambda}$ and $p_{2}(\lambda,\mu)=e^{i\mu}$. Therefore  $T_{p_{1}}^{G}(X)=UX$, $T_{p_{2}}^{G}(X)=XV$ and $T_{f\circ p_{1}}^{G}(X)=f(U)X$, $T_{f\circ p_{2}}^{G}(X)=Xf(V)$, which are the immediate implications of \eqref{Prop3}.
	
	Again \eqref{Prop1} and \eqref{Prop2} imply
	\begin{align}
		\nonumber f(U)X-Xf(V)&=T_{f\circ p_{1}}^{G}(X)-T_{f\circ p_{2}}^{G}(X)\\
		\nonumber&=T_{(f\circ p_{1}-f\circ p_{2})}^{G}(X)\\
		\nonumber&=T_{(p_{1}-p_{2})h^{[1]}}^{G}(X)\\
		&=T_{h^{[1]}}^{G}\left(T_{p_{1}}^{G}(X)-T_{p_{2}}^{G}(X)\right)  =T_{h^{[1]}}^{G}\left(UX-XV\right), \label{eq20}
	\end{align}
	where $h^{[1]}(\cdot,\cdot)$ is as given in \eqref{eq21}. Therefore, \eqref{eq20} further implies that
	\begin{align*}
		\|f(U)X-Xf(V)\|_{2}\le\|h^{[1]}\|_{\infty}\|UX-XV\|_{2}.
	\end{align*}
	This completes the proof since $\|h^{[1]}\|_{\infty}=\|f\|_{\lip(\cir)}.$
\end{proof}

We shall need the following theorem which is a special case of \cite[Theorem 2.2]{AcSi14}. The original idea goes back to Voiculescu \cite{Voiapp}.
\begin{thm}\label{Thm5}
	Let $T$ be a normal contraction on $\hilh$ and $V\in{\mathcal S}_{2}(\hilh)$. Then there exists a sequence $\{P_{n}\}$ of finite rank projections, such that $\{P_{n}\}\uparrow I$ and each of the following terms 
	\begin{align*}
		(i)~\|P_{n}^{\perp}T P_{n}\|_{2},\hspace*{0.5cm}(ii)~\|P_{n}T P_{n}^{\perp}\|_{2},\hspace*{0.5cm}(iii)~\|P_{n}^{\perp}V\|_{2},\hspace*{0.5cm}(iv)~\|P_{n}^{\perp}V^{*}\|_{2}
	\end{align*}
	converge to $0$ as $n\to\infty$.
\end{thm}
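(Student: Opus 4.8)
\emph{Proof strategy.} First I would note that (iii) and (iv) hold automatically for \emph{any} increasing sequence $\{P_n\}$ of finite rank projections with $P_n\uparrow I$: fixing an orthonormal basis $\{f_k\}$ of $\hilh$ we have $\|P_n^\perp V\|_2^2=\sum_k\|P_n^\perp Vf_k\|^2$, each summand is dominated by $\|Vf_k\|^2$, the series $\sum_k\|Vf_k\|^2=\|V\|_2^2$ converges, and $\|P_n^\perp Vf_k\|\to 0$ because $P_n^\perp\to 0$ strongly; dominated convergence gives $\|P_n^\perp V\|_2\to 0$, and $\|P_n^\perp V^*\|_2\to 0$ the same way using $V^*\in\mathcal S_2(\hilh)$. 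So only (i) and (ii) carry content, and since the off--diagonal blocks $P_n^\perp TP_n$ and $P_nTP_n^\perp$ are orthogonal in $\mathcal S_2(\hilh)$ one has $\|P_n^\perp TP_n\|_2^2+\|P_nTP_n^\perp\|_2^2=\|[P_n,T]\|_2^2$. Thus it suffices to produce finite rank projections $P_n\uparrow I$ with $\|[P_n,T]\|_2\to 0$.

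For this I would work with the spectral measure $E$ of the normal operator $T$, supported on $\sigma(T)\subseteq\overline{\D}$, and split $\hilh=\hilh_{\mathrm{pp}}\oplus\hilh_{\mathrm c}$ into the atomic and continuous parts of $E$ (both reducing $T$). On $\hilh_{\mathrm{pp}}$, where $T$ is diagonalizable, I would let the $\hilh_{\mathrm{pp}}$--component of $P_n$ be finite rank spectral projections of $T|_{\hilh_{\mathrm{pp}}}$ increasing to $I_{\hilh_{\mathrm{pp}}}$; these commute with $T$ and so contribute nothing to $[P_n,T]$. On $\hilh_{\mathrm c}$ the continuous measure $E_{\mathrm c}$ has no atoms, hence $E_{\mathrm c}(\Delta)$ is either $0$ or of infinite rank for every Borel set $\Delta$ --- which is exactly the freedom I want. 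Decomposing $T|_{\hilh_{\mathrm c}}$ into cyclic summands with cyclic vectors $\psi$, I would fix a refining sequence of finite Borel partitions $\pi_n$ of $\overline{\D}$ with $\operatorname{mesh}(\pi_n)\to 0$ and let the $\hilh_{\mathrm c}$--component of $P_n$ be the projection onto $\operatorname{span}\{E_{\mathrm c}(\Delta)\psi:\Delta\in\pi_n\}$, summed over the cyclic summands. Because $\pi_{n+1}$ refines $\pi_n$ and the vectors $E_{\mathrm c}(\Delta)\psi$ ($\Delta\in\pi_n$) are mutually orthogonal, this is increasing, has supremum $I_{\hilh_{\mathrm c}}$ by cyclicity, and its rank equals the number of cells actually charged.

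The commutator estimate would be done cell by cell. Writing $R_\Delta$ for the slab of $P_n$ inside $E_{\mathrm c}(\Delta)\hilh$, the pieces
\[
[R_\Delta,T]=R_\Delta T\,(E_{\mathrm c}(\Delta)-R_\Delta)-(E_{\mathrm c}(\Delta)-R_\Delta)\,T R_\Delta
\]
live in mutually orthogonal subspaces, so
\[
\|[P_n,T]\|_2^2=\sum_{\Delta\in\pi_n}\Big(\big\|R_\Delta T(E_{\mathrm c}(\Delta)-R_\Delta)\big\|_2^2+\big\|(E_{\mathrm c}(\Delta)-R_\Delta)TR_\Delta\big\|_2^2\Big).
\]
On $E_{\mathrm c}(\Delta)\hilh$ one has $\|(T-\lambda_\Delta)E_{\mathrm c}(\Delta)\|\le\operatorname{mesh}(\pi_n)$ for any $\lambda_\Delta\in\overline{\Delta}$, and replacing $T$ by $T-\lambda_\Delta$ leaves these norms unchanged, so each term is at most $\operatorname{mesh}(\pi_n)^2\operatorname{rank}(R_\Delta)$; summing,
\[
\|[P_n,T]\|_2^2\le 2\,\operatorname{mesh}(\pi_n)^2\,\operatorname{rank}(P_n).
\]

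The hard part --- and the genuine content of the theorem --- is to push this bound to $0$. Since $\operatorname{rank}(P_n)\to\infty$ is forced by $P_n\uparrow I$, the partitions must be chosen so that $\operatorname{rank}(P_n)=o\big(\operatorname{mesh}(\pi_n)^{-2}\big)$, i.e. so that the number of cells of $\pi_n$ meeting $\sigma(T)$ grows strictly more slowly than $\operatorname{mesh}(\pi_n)^{-2}$. This works out cleanly when $\sigma(T)$ is essentially one--dimensional --- in particular when $T$ is unitary (spectrum in $\cir$) or self--adjoint, which is the regime in which the theorem gets used --- and in general it is exactly what \cite[Theorem 2.2]{AcSi14} and Voiculescu's construction of quasicentral approximate units \cite{Voiapp} achieve by a more careful choice of the partitions and of the ranks of the slab subprojections. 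An equivalent shortcut in these cases: first use a Weyl--von Neumann type theorem to write $T=D+K$ with $D$ diagonal and $K\in\mathcal S_2(\hilh)$, then take $P_n$ to be the truncation to the first $n$ vectors of the diagonalizing basis of $D$; now $[P_n,T]=[P_n,K]$ and $\|[P_n,K]\|_2\le\|P_n^\perp K\|_2+\|P_n^\perp K^*\|_2\to 0$ by the dominated convergence argument of the first paragraph, the only nontrivial ingredient being the $\mathcal S_2$--Weyl--von Neumann decomposition itself (classical for self--adjoint and unitary $T$).
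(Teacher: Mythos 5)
The paper does not actually prove this statement: it is introduced with ``We shall need the following theorem which is a special case of \cite[Theorem 2.2]{AcSi14}. The original idea goes back to Voiculescu \cite{Voiapp}'' and no argument is given. So the honest comparison is between your attempt and that citation. Everything you do prove is correct: the dominated-convergence argument for (iii) and (iv) for an arbitrary sequence of finite rank projections increasing strongly to $I$; the identity $[P_n,T]=P_nTP_n^{\perp}-P_n^{\perp}TP_n$ with the two blocks orthogonal in $\mathcal S_2(\hilh)$, reducing (i) and (ii) to $\|[P_n,T]\|_2\to 0$; the splitting into atomic and continuous parts; and the cell-by-cell estimate $\|[P_n,T]\|_2^2\le 2\,\operatorname{mesh}(\pi_n)^2\operatorname{rank}(P_n)$. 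You also correctly locate the one step that carries all the weight: making $\operatorname{rank}(P_n)=o(\operatorname{mesh}(\pi_n)^{-2})$. For spectrum contained in $\cir$ (or in $\R$) the number of charged cells is $O(\operatorname{mesh}^{-1})$ per cyclic summand and your construction closes completely --- and this is in fact the only case the paper ever uses, since Theorem \ref{Thm5} is applied in the proof of Theorem \ref{Thm6} to the unitaries $U$ and $V$.

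The gap, which you flag yourself, is the general normal contraction with genuinely two-dimensional spectrum: there the naive count gives $\operatorname{rank}(P_n)\asymp\operatorname{mesh}(\pi_n)^{-2}$ and your bound stalls at a constant. That is not a removable technicality; $p=2$ is exactly the critical Schatten exponent for a pair of commuting self-adjoint operators, and overcoming it is the entire content of Voiculescu's construction and of \cite[Theorem 2.2]{AcSi14}. Relatedly, your closing remark that one could instead invoke an $\mathcal S_2$ Weyl--von Neumann decomposition $T=D+K$ which is ``classical'' is only safe for self-adjoint and unitary $T$ (Weyl--von Neumann--Kuroda); for normal $T$ the existence of a diagonal-plus-$\mathcal S_2$ decomposition is again a nontrivial Voiculescu-type theorem, so that shortcut does not discharge the citation either. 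In summary: your proof is complete and self-contained precisely in the regime the paper needs, and for the full statement you rely on the same external input the authors do --- which is a fair outcome, but you should state explicitly that the two-dimensional case is being quoted rather than proved.
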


{\em Proof of Theorem \ref{Thm6}}: 
Applying Theorem \ref{Thm4} to $\left(f(U)P_{n}-P_{n}f(V)\right)$, where $P_n$ are as in Theorem \ref{Thm5}, we get
\begin{align}
	\label{eq22}\|f(U)P_{n}-P_{n}f(V)\|_{2}\le\|f\|_{\lip(\cir)}\|UP_{n}-P_{n}V\|_{2}.
\end{align}
Again, note that
\begin{align}
	\nonumber\|(UP_{n}-P_{n}V)-(U-V)\|_{2}&=\|UP_{n}^{\perp}-P_{n}^{\perp}V\|_{2}\\
	\nonumber&=\|P_{n}UP_{n}^{\perp}+P_{n}^{\perp}UP_{n}^{\perp}-P_{n}^{\perp}VP_{n}-P_{n}^{\perp}VP_{n}^{\perp}\|_{2}\\
	\label{eq26}&\le\|P_{n}UP_{n}^{\perp}\|_{2}+\|P_{n}^{\perp}VP_{n}\|_{2}+\|P_{n}^{\perp}(U-V)\|_{2}.
\end{align}
Therefore, using Theorem \ref{Thm5} and \eqref{eq26}, we have $\|(UP_{n}-P_{n}V)-(U-V)\|_{2} \to 0$ as $n\to\infty$. As a result,
\begin{align}
	\nonumber&\lim_{n\to\infty}\|T_{h^{[1]}}^{G}(UP_{n}-P_{n}V)-T_{h^{[1]}}^{G}(U-V)\|_{2}=0 \; \text{ and }\\
	\nonumber&\lim_{n\to\infty}\|(f(U)P_{n}-P_{n}f(V))-T_{h^{[1]}}^{G}(U-V)\|_2=0,
\end{align}
where $h^{[1]}$ and $G(\cdot)$ are the same as in \eqref{eq21} and \eqref{eq25} respectively. Hence
\begin{align*}
	f(U)-f(V)&=T^{G}_{h^{[1]}}(U-V)\\
	&=\int_{0}^{2\pi}\int_{0}^{2\pi}\frac{f(e^{i\lambda})-f(e^{i\mu})}{e^{i\lambda}-e^{i\mu}}\,dE(\lambda)\,(U-V)\,dF(\mu),
\end{align*}
which implies
\begin{align*}
	\|f(U)-f(V)\|_{2}\le \|f\|_{\lip(\cir)}\|U-V\|_{2}.
\end{align*}
\qed

\begin{rmrk}
	Note that $\|f\|_{\lipp}\le\|f\|_{\lip(\cir)}$ for every Lipschitz function $f:\cir\to\C$. It is worth mentioning that $|\lambda-\mu|\le\pi$ implies $|e^{i\lambda}-e^{i\mu}|\ge\frac{2}{\pi}|\lambda-\mu|$, and this further implies that $\|f\|_{\lip(\cir)}\le\frac{\pi}{2}\|f\|_{\lipp}$. Therefore \eqref{eq27} establishes a better estimate compared to the previously known estimate \eqref{d_p}.
\end{rmrk}
\begin{rmrk}
	It is established in \cite[Theorem~4.2]{Pe09} that the formula~\eqref{doiuniformula}, with $(U, V)$ replaced by pair of contractions $(T,R)$, remains valid provided that $f$ belongs to the class $ \{\, f \in \mathcal{A}(\mathbb{T}) : f' \in \mathcal{A}(\mathbb{T}) \,\}.$
	In this context, however, the measures $E$ and $F$ appearing in Theorem~\ref{Thm6} should be interpreted as the \emph{semi-spectral measures} associated with $T$ and $R$, respectively (see \cite[Section~2.5]{Pe09} for the definition), rather than as spectral measures. For arbitrary Lipschitz functions, such as trigonometric polynomials, it is unknown whether the formula~\eqref{doiuniformula} holds in its present form for contractions.

\end{rmrk}

\section{Open problems}\label{Sec4}
\begin{Problem}
The constant in Theorem \ref{Thm3}, viz., $2^{\frac{1}{p}}\left[\frac{2+(1-\|T_0\|^2)^{1/2}}{(1-\|T_0\|^2)^{1/2}}\right]~d_p $ blows up  as $\|T_0\|$ gets close to $1$. It will be interesting to know if there a function $f$ which is Lipschitz but not in the disk algebra $\mathcal A(\mathbb T)$ and a sequence of contractions $\{T_{0n}\}$ such that $\| T_1 - T_{0n}\|_p$ remains bounded, $\| T_{0n} \| \uparrow 1$ and $\|f(T_1)-f(T_{0n})\|_p \uparrow  \infty$.
\end{Problem}

Moving on to two variables, let $B^{1}_{\infty,1}(\cir^{2})_{+}$ be the analytic subspace of the Besov space $B^{1}_{\infty,1}(\cir^{2})$. For a detailed exposition on Besov spaces and its properties, see \cite[Section 2]{Peller19}. Peller proved the following in \cite{Peller19}.
\begin{thm}
	Let $1\le p<\infty$ and let $(T_{1},R_{1})$ and $(T_{2},R_{2})$ be pairs of commuting contractions such that $T_{1}-T_{2},~R_{1}-R_{2}\in\mathcal{S}_{p}(\hilh)$. Suppose that $f$ is a function of class $B^{1}_{\infty,1}(\cir^{2})_{+}$. Then $\left(f(T_{1},R_{1})-f(T_{2},R_{2})\right)\in\mathcal{S}_{p}(\hilh)$ and
	\begin{align*}
		\left\|f(T_{1},R_{1})-f(T_{2},R_{2})\right\|_{p}\le{\normalfont\text{const}}\,\|f\|_{B^{1}_{\infty,1}(\cir^{2})}~\max\left\{\|T_{1}-T_{2}\|_{p},~\|R_{1}-R_{2}\|_{p}\right\}.
	\end{align*}
\end{thm}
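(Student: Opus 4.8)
The plan is to deduce the two-variable Schatten estimate from the commutative functional calculus on the bitorus together with a double operator integral representation, exactly paralleling the one-variable argument already carried out in Section \ref{DOI} but now in the polydisk setting. First I would use the fact that a pair of commuting contractions $(T,R)$ admits a simultaneous unitary (power) dilation in the bidisk — Ando's dilation theorem — so that $f(T,R)=P_\hilh f(U,W)|_\hilh$ for a commuting pair of unitaries $(U,W)$ dilating $(T,R)$, and this functional calculus is independent of the dilation exactly as in the one-variable discussion preceding Theorem \ref{Thm3}. This reduces everything to the case of two commuting pairs of unitaries $(U_1,W_1)$ and $(U_2,W_2)$ with $U_1-U_2, W_1-W_2\in\sph$, provided the dilations can be chosen so that the differences of the dilation operators stay in $\sph$ with controlled norm (the two-variable analogue of Theorem \ref{Thm2}); this bookkeeping on the dilation is where I expect the first real work to lie, since Ando's dilation is far less explicit than Sch\"affer's.

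Next, for the unitary case, I would set up the triple operator integral with respect to the product spectral measure built from the four spectral measures $E_1,F_1$ (of $U_1,W_1$) and $E_2,F_2$ (of $U_2,W_2$), and use the divided-difference telescoping
\begin{align*}
	f(U_1,W_1)-f(U_2,W_2)=\big(f(U_1,W_1)-f(U_2,W_1)\big)+\big(f(U_2,W_1)-f(U_2,W_2)\big),
\end{align*}
applying the one-variable Birman--Solomyak representation (Theorem \ref{Thm6}, in the operator-valued / sandwiched form of Lemma \ref{Thm4}) in each slot. Concretely, $f(U_1,W_1)-f(U_2,W_1)$ is obtained by freezing the second variable and integrating the first divided difference of $f$ against $dE_1(\lambda)\,(U_1-U_2)\,dE_2(\cdot)$ in the first variable while inserting the spectral measure of $W_1$ in the second, and symmetrically for the other term. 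For a function $f\in B^1_{\infty,1}(\cir^2)_+$ the relevant divided differences are $L^\infty$ on the product (this is the standard Besov-class estimate on the divided difference; it is why the Besov norm, not merely a Lipschitz norm, appears), so the resulting double/triple operator integrals are bounded on $\sph$ with norm controlled by $\operatorname{const}\,\|f\|_{B^1_{\infty,1}(\cir^2)}$, yielding
\begin{align*}
	\|f(U_1,W_1)-f(U_2,W_2)\|_p\le\operatorname{const}\,\|f\|_{B^1_{\infty,1}(\cir^2)}\,\max\{\|U_1-U_2\|_p,\|W_1-W_2\|_p\}.
\end{align*}

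Finally I would transfer back: compress by $P_\hilh$ on both sides to replace $f(U_j,W_j)$ by $f(T_j,R_j)$, and bound $\|U_1-U_2\|_p$ and $\|W_1-W_2\|_p$ in terms of $\|T_1-T_2\|_p$ and $\|R_1-R_2\|_p$ using the dilation estimate from the first step, absorbing the constants. The analyticity hypothesis $f\in B^1_{\infty,1}(\cir^2)_+$ is what makes the compression legitimate, since then $f(U,W)$ depends only on the nonnegative Fourier modes and the identities $p(T,R)=P_\hilh p(U,W)|_\hilh$ for analytic polynomials pass to the limit via the von Neumann inequality on the bidisk.

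The main obstacle, as indicated above, is the two-variable dilation step: unlike the Sch\"affer construction, Ando's dilation does not come with an explicit matrix, so controlling $\|U_1-U_2\|_p$, $\|W_1-W_2\|_p$ by the data $\|T_1-T_2\|_p$, $\|R_1-R_2\|_p$ requires either a careful choice of a concrete realization of Ando's dilation that depends continuously (in $\sph$) on the contraction, or an argument that bypasses the dilation entirely by working directly with semi-spectral measures of the contractions (as in \cite{Pe09}) and a polydisk version of the double operator integral. I would expect the cleanest route to be the latter — replacing $E,F$ throughout by the semi-spectral measures of $T_j,R_j$ and checking that the Besov-class divided-difference estimates still produce bounded (semi-)double operator integrals on $\sph$ — with the dilation picture kept only as motivation.
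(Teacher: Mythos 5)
This theorem is not proved in the paper at all: it is quoted verbatim from Peller \cite{Peller19} as motivation for the open problems in Section \ref{Sec4}, so there is no in-paper argument to compare against. Judged on its own merits (and against Peller's actual proof), your proposal has two genuine gaps. First, the dilation route you lead with is exactly the obstruction that makes the paper's Problem 2 an \emph{open} problem: Ando's dilation of a commuting pair is obtained by an abstract extension/rotation construction, and no analogue of Theorem \ref{Thm2} is available that produces joint dilations $(U_1,W_1)$, $(U_2,W_2)$ on a common space with $\|U_1-U_2\|_p$, $\|W_1-W_2\|_p$ controlled by $\|T_1-T_2\|_p$, $\|R_1-R_2\|_p$. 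You flag this honestly, but flagging it does not close it, and your fallback ("work with semi-spectral measures directly") is a statement of intent rather than an argument. Note also that the statement includes $p=1$, so any route through the one-variable unitary constants $d_p$ of \eqref{d_p} is doomed from the start, since those blow up as $p\to1$.

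Second, and more seriously, the key analytic step is wrong as stated. After telescoping in the two variables, the symbol appearing in each slot is a divided difference in \emph{three} variables (e.g.\ $\bigl(f(e^{i\lambda},e^{i\nu})-f(e^{i\mu},e^{i\nu})\bigr)/(e^{i\lambda}-e^{i\mu})$), so the relevant transformer is a \emph{triple} operator integral. Mere $L^\infty$-boundedness of the symbol does not give boundedness of a triple operator integral on $\sph$ — it does not even give boundedness on $\mathcal{S}_2(\hilh)$, in contrast with the double operator integral situation of Section \ref{DOI}. The whole content of Peller's proof is to show that for $f\in B^1_{\infty,1}(\cir^2)_+$ these divided differences lie in a Haagerup-type tensor product $L^\infty\otimes_{\mathrm h}L^\infty\otimes^{\mathrm h}L^\infty$ (via the dyadic Besov decomposition $f=\sum_n f_n$ into polynomial blocks), which is what makes the triple operator integrals against the \emph{semi-spectral} measures of the contractions bounded on $\sph$ for all $1\le p<\infty$, with no dilation bookkeeping needed; analyticity enters to define $f(T_2,R_1)$ for the (possibly non-commuting) mixed pair in the telescoping. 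So the parenthetical "this is why the Besov norm appears" has the logic backwards: the Besov hypothesis is needed precisely because $L^\infty$ control of the divided difference is not enough. As written, the central estimate of your proposal is unproved.
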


Let $(T_{1},R_{1})$ and $(T_{2},R_{2})$ be two pairs of doubly commuting contractions acting on $\hilh$, i.e., $T_i R_i = R_i T_i$ and $T_i^* R_i = R_i T_i^*$ for $i=1,2$. Let $(U_{1},V_{1})$ and $(U_{2},V_{2})$ be the commuting unitary dilations of the pairs $(T_{1},R_{1})$ and $(T_{2},R_{2})$ respectively on the Hilbert spaces $\hilk_1\supset\hilh$ and $\hilk_2\supset\hilh$. If $P_{\hilh},~Q_{\hilh}$ are the projections from $\hilk_1$ onto $\hilh$ and $\hilk_2$ onto $\hilh$ respectively, then for any trigonometric polynomial $p(z_{1},z_{2})$, we have
\begin{align}
	\label{Opeq2}&p(T_{1},R_{1})=P_{\hilh}\,p(U_{1},V_{1})\big|_{\hilh}\hspace{0.5cm}\text{ and }\hspace{0.5cm}p(T_{2}, R_{2})=Q_{\hilh}\,p(U_{2},V_{2})\big|_{\hilh},
\end{align}
and by a repetition of the argument employed in this paper, one can extend \eqref{Opeq2} to every $f\in C(\cir^{2})$. We conclude with two open problems.

\begin{Problem}
	Let $1<p<\infty$ and let $(T_{1},R_{1})$ and $(T_{2},R_{2})$ be two pairs of doubly commuting contractions on $\hilh$  satisfying the following conditions:
	\begin{enumerate}[{\normalfont(i)}]
		\item $T_2$ and $R_{2}$ are strict contractions.
		\item $T_{1}-T_{2}\in\mathcal{S}_{p}(\hilh),\,R_{1}-R_{2}\in\mathcal{S}_{p}(\hilh)$.
	\end{enumerate}
	Suppose that $f$ is a function of class $\lip(\cir^{2})$. Then is it true that 
	$$\left(f(T_{1},R_{1})-f(T_{2},R_{2})\right)\in\mathcal{S}_{p}(\hilh)?$$ 
	If yes, does there exist a constant $d_{p,T_{2},R_{2}}>0$ such that
	\begin{align*}
		\left\|f(T_{1},R_{1})-f(T_{2},R_{2})\right\|_{p}\le d_{p,T_{2},R_{2}}\|f\|_{\lip(\cir^{2})}~\max\left\{\|T_{1}-T_{2}\|_{p},~\|R_{1}-R_{2}\|_{p}\right\}
	\end{align*}
	holds, where 
	\begin{align*}
		\|f\|_{\lip(\cir^{2})}&=\inf\Big\{K>0:~~|f(z_{1},w_{1})-f(z_{2},w_{2})|\\
		&\le K\max\left\{|z_{1}-z_{2}|,\,|w_{1}-w_{2}|\right\}\Big\}?
	\end{align*}	
\end{Problem}

\subsection*{Acknowledgment}
The authors thank the anonymous referee for a careful reading and pertinent suggestions. T. Bhattacharyya is supported by a J C Bose Fellowship JCB/2021/000041 of SERB. A. Chattopadhyay is supported by the Core Research Grant (CRG), File No: CRG/2023/004826, of SERB. S. Giri acknowledges the support by the Prime Minister's Research Fellowship (PMRF), Government of India. C. Pradhan acknowledges support from JCB/2021/000041, IoE post-doctoral fellowship as well as NBHM post-doctoral fellowship. This research is supported by the DST FIST program-2021 [TPN-700661].

\end{document}